\numberwithin{equation}{section}
\DeclareMathOperator{\Div}{div}
\newcommand{\R}{\mathbb R}
\newcommand{\N}{\mathbb N}
\newcommand{\dd}{\mathrm d}
\newcommand{\GG}{\boldsymbol{\mathfrak{G}}}
\renewcommand{\bfB}{B}
\newcommand{\dx}{\,\mathrm{d}x}
\newcommand{\dy}{\,\mathrm{d}y}
\newtheorem{theorem}{Theorem}[section]
\newtheorem{corollary}[theorem]{Corollary}
\newtheorem{remark}[theorem]{Remark}
\theoremstyle{definition}
\newtheorem{definition}[theorem]{Definition}
\newcommand{\seb}[1]{\textcolor[rgb]{0.00,0.00,1.00}{  #1}}
\begin{document}

\title[Navier boundary conditions and rough domains]
{The Stokes problem with Navier boundary conditions in irregular domains}

\author{Dominic Breit}
\address{Institute of Mathematics, TU Clausthal, Erzstra\ss e 1, 38678 Clausthal-Zellerfeld, Germany}
\email{dominic.breit@tu-clausthal.de}
\author{Sebastian Schwarzacher}
\address{Department of Mathematical Analysis,
	Faculty of Mathematics and Physics,
	Charles University,
	Sokolovská 83,
	186 75 Praha 8, Czech Republic}
\address{and}
\address{Department of Mathematics, 
	Analysis and Partial Differential Equations, 
	Uppsala University, 
	L\"agerhyddsv\"agen 1,
	752 37 Uppsala, Sweden}
\email{schwarz@karlin.mff.cuni.cz}


\subjclass[2020]{35B65,35Q30,74F10,74K25,76D03,}

\date{\today}


\keywords{Stokes system, Navier boundary conditions, Maximal regularity theory, irregular domains}

\begin{abstract}

 We consider the steady Stokes equations supplemented with Navier boundary conditions including a non-negative friction coefficient. We prove maximal regularity estimates (including the prominent spaces $W^{1,p}$ and $W^{2,p}$ for $1<p<\infty$ for the velocity field) in bounded domains of minimal regularity. Interestingly, exactly one derivative more is required for the local boundary charts compared to the case of no-slip boundary conditions. We demonstrate the sharpness of our results by a propos examples.

\end{abstract}

\maketitle

\section{Introduction}

For a given forcing  $\bff:\Omega\rightarrow\R^{n}$ or  $\bfF:\Omega\rightarrow\R^{n\times n}$
we consider the Stokes system
 \begin{align}
 -\Delta\bfu+\nabla \pi=\bff,\quad \Div\bfu=0,&\label{3}
 \end{align}
in a bounded domain $\Omega\subset\R^n$, $n=2,3$.  
%
We aim at a maximal regularity theory, which includes estimates of the form
\begin{align}\label{eq:aim}
\|\nabla^2\bfu\|_{L^p(\Omega)}+\|\nabla\pi\|_{L^p(\Omega)}\lesssim \|\bff\|_{L^p(\Omega)}
\end{align}
or in case $\bff=\Div \bfF$ of type
\begin{align}\label{eq:aim'}
\|\nabla\bfu\|_{L^p(\Omega)}+\|\pi\|_{L^p(\Omega)}\lesssim \|\bfF\|_{L^p(\Omega)}
\end{align}
for $1<p<\infty$.
 A classical question concerns the validity of \eqref{eq:aim} under no-slip boundary conditions 
\begin{align}\label{eq:ns}
\bfu=0\quad\text{on}\quad\partial\Omega.
\end{align}
First results are attributed to Cattabriga \cite{Ca}, for an exhaustive picture and detailed references we refer to Galdi's book \cite[Chapter IV]{Ga}. The classical assumption here is that $\partial\Omega$ has a $C^2$-boundary for \eqref{eq:aim} and $C^1$-for \eqref{eq:aim'}. The latter result can be found in~\cite{BulBurSch16}, which includes also certain non-linear perturbations of the Stokes equation. However, many applications require a rougher boundary.
Under minimal assumptions concerning the boundary regularity a corresponding theory has been developed only very recently by the first author \cite{Br}. This is based on the theory of Sobolev multipliers, cf. the book by Maz'ya--Shaposhnikova \cite{MaSh}, which at generalizes respective assumptions that can be found in~\cite{LSU}. It has been employed before successfully to obtain regularity estimates for the Laplace equation in non-smooth domains.
As in the case of the Laplace equation (where necessity of this assumption is proved in \cite[Chapter 14]{MaSh}) the $W^{2,p}$ estimate \eqref{eq:aim} requires that the local boundary charts belong the the space of Sobolev multipliers (see Section \ref{sec:SM} for the precise definition and basic properties)
\begin{align}\label{eq:noslip}
\mathscr M^{2-1/p,p},
\end{align}
a subset of the trace space $W^{2-1/p,p}$. If the index of the space is sufficiently large (such that it is a multiplication algebra) one has that $\mathscr M^{2-1/p,p}\cong W^{2-1/p,p}$. Otherwise, more integrability is required for a Sobolev function to belong to the corresponding multiplier space cf.~Section \ref{sec:SM} below.

In several applications, the behaviour of the fluid close to the boundary is not adequately described by \eqref{eq:ns}. Hence different boundary conditions have been proposed in literature.
Probably the most prominent one was suggested by Navier in \cite{Na}. The boundary condition allows for slipping at the interface, which appears, when the surface is roughness~\cite{JM00}. Besides its importance in homogenization for rough boundaries partially constraint boundary conditions appear also in many other physical applications as free boundary problems, inflow outflow and more~\cite{BMR09}. 

The slipping is usually restricted by
a friction coefficient $\alpha:\partial\Omega\rightarrow[0,\infty)$ which measures the tendency of the fluid to slip over the boundary. In mathematical formulas the so-called Navier boundary conditions read as
\begin{align} \label{3a}
\bfu\cdot\bfn=0,\quad (\nabla\bfu\,\bfn)_{\bftau}+\alpha \bfu_\bftau=0,\quad\text{on}\quad\partial\Omega.
\end{align}
Here $\bfn$ denotes normal vector at $\partial\Omega$ and we set $\bfv_\bftau:=\bfv-(\bfv\cdot\bfn)\bfn$. Of special analytic interest is the perfect-slip case, when $\alpha\equiv 0$.

Despite its importance for application the literature on regularity is not very extensive here. Recent results are given by Acevedo Tapia et al. \cite{AACG} (see also \cite{AR}), where it has been shown, that \eqref{eq:aim'} holds for the system \eqref{3}, \eqref{3a} provided the boundary $\partial\Omega$ belongs to the class $C^{1,1}$. The space $C^{1,1}$ actually is the sharp limit space where so called BMO estimates are available~\cite{MS}. The papers \cite{KSW1,KSW} provide a parabolic counterpart of \eqref{eq:aim'} for the unsteady problem in a two-dimensional wedge.

In Theorems \ref{thm:stokessteady} and \ref{thm:stokessteadyF} we offer an exhaustive picture concerning
the maximal regularity theory for the Stokes system \eqref{3} with inhomogeneous Navier boundary conditions \eqref{3a}
in irregular domains in the framework of fractional Sobolev spaces. In particular, we prove that estimate \eqref{eq:aim} is true provided the local boundary charts of $\partial\Omega$ belong to $\mathscr M^{3-1/p,p}$ and \eqref{eq:aim'} holds if they lie in $\mathscr M^{2-1/p,p}$. Interestingly, this requires exactly one derivative more compared to the no-slip case as explained in 
\eqref{eq:noslip} above. This was already observed in \cite{MS} for the Schauder theory, where a $C^{1,\alpha}$ theory is sharply available, when the domain is $C^{2,\alpha}$. Indeed, as was observed there, sharpness can actually be more or less directly exported from the Laplace theory. 

\subsection{Sharpness of the boundary regularity.}
We comment now on the necessity of these conditions. We explain in detail the sharpness for \eqref{eq:aim}. However, we claim that similar arguments allow to show the sharpness also for the other cases, as  we can generally reduce the problem to the Laplace equation with Dirichlet boundary values, where the sharpness is known in \cite[Chapter 14]{MaSh}. The respective velocity satisfying Stokes equation with Navier-slip boundary values then possesses exactly one degree of differentiability less.

Following the second author's work \cite[Section 5]{MS} we consider the problem
\begin{align}
\label{eq:bilaplace}
\begin{aligned}
\Delta^2 w=-\mathrm{curl}\,\bff=-\mathrm{curl}\Div\bfF\quad&\text{in}\quad\Omega,\\
w=\Delta w=0\qquad\qquad\quad&\text{on}\quad\partial\Omega,
\end{aligned}
\end{align}
where $\Omega\subset\R^2$ and $\mathrm{curl}\, \bfv=-\partial_2 v^1+\partial_1 v^2$ for a vector field $\bfv:\Omega\rightarrow\R^2$. One easily checks, cf.  \cite[Section 5.6]{MS} for details, that
 $\bfu=(-\partial_2 w,\partial_1 w)^\top$ solves \eqref{3} with perfect slip boundary conditions, i.e., \eqref{3a} for $\alpha=0$. We make the particular choice
\begin{align*}
\Omega=\{(x,y)\in\R^2:\,y\geq \phi(x)\} \subset\R^2
\end{align*}
for a given function $\phi:\R\rightarrow\R$ and choose $\bff=0$ (or $\bfF=0$). By the Riemannian mapping theorem there exists a holomorphic function
\begin{align*}
z:\Omega\rightarrow\mathbb H:=\{(x,y)\in\R^2:\,y\geq0\}.
\end{align*}
Setting $w:=\mathrm{Im}(z)$, we clearly have $w=\Delta w=0$ on $\partial\Omega$.
Suppose now that $\bfu\in W^{2,p}_{\mathrm loc}$ for some $p>1$ and $\phi\in C^{1,1}(\R)$. 
Then, using that $\bfu=(-\partial_2 w,\partial_1 w)^\top$ we must have $w\in W^{3,p}_{\mathrm loc}$. Now \cite[Theorem 14.6.3]{MaSh} applies yielding $\phi\in W_{\mathrm loc}^{3-1/p,p}$. We provided now the elementary argument leading to this implication.
Since $w$ has zero boundary values by construction, the tangential derivative vanishes as well (see \cite[equ. (9.5.3)]{MaSh} for a rigorous proof), i.e.,
\begin{align*}
\mathrm{tr}\,\partial  (w\circ \bfPhi)+\mathrm{tr}\,\partial_y (w\circ \bfPhi)\partial \phi=0\quad\text{on}\quad\partial\Omega.
\end{align*}
Here $\bfPhi$ is an extension of $\phi$ (see \eqref{eq:Phi} below for details) and $\mathrm{tr}$ the trace operator related to $\partial\Omega$.
Noticing that $\mathrm{tr}\,\partial_y (w\circ \bfPhi)$ is strictly positive (by Hopf's maximum principle) this is equivalent to
\begin{align*}
\partial \phi=-\frac{\mathrm{tr}\,\partial  (w\circ \bfPhi)}{\mathrm{tr}\,\partial_y (w\circ \bfPhi)}\quad\text{on}\quad\partial\Omega.
\end{align*}
For $n=2$ and $p>1$ we have that $W^{2-1/p,p}_{\mathrm loc}$ is a multiplication algebra which implies $\phi\in W^{3-1/p,p}_{\mathrm loc}$ as desired. A similar argument can be applied if $\bfu\in W^{1,p}_{\mathrm loc}$ for some $p>2$ and $\phi\in C^{0,1}(\R)$
 implying  $\phi\in W^{2-1/p,p}_{\mathrm loc}$.
This verifies the sharpness of our assumptions concerning the boundary regularity.

\subsection{The structure of the paper.}
The structure of the paper is as follows. We present some preliminary material in the next section. In particular, we introduce the functional analytical framework including known results on Sobolev multipliers. Eventually, we discuss the parametrisation of domains by local charts. In Section \ref{sec:half} we consider the system \eqref{3}, \eqref{3a}. in the half space.
The bulk of the paper is Section \ref{sec:stokessteadyF} in which we prove the estimate for the problem in divergence form. In the subsequent section we reduce the problem in non-divergence form to it and obtain a corresponding theorem. In Appendix \ref{sec:AA} we give some details on the dual formulation for \eqref{3}, \eqref{3a}.
In Appendix \ref{sec:AB} we provide estimates for the Neumann problem for the Laplace equation in rough domains. They frequently serve as an auxiliary tool.

\section{Preliminaries}
\subsection{Conventions}
We write $f\lesssim g$ for two non-negative quantities $f$ and $g$ if there is a $c>0$ such that $f\leq\,c g$. Here $c$ is a generic constant which does not depend on the crucial quantities. If necessary we specify particular dependencies. We write $f\approx g$ if $f\lesssim g$ and $g\lesssim f$.
We do not distinguish in the notation for the function spaces between scalar- and vector-valued functions. However, vector-valued functions will usually be denoted in bold case and tensors by capital bold letters, i.e. we denote by $\bfF=(\bfF^1,....,\bfF^n)^\top$, where $\bfF^i:\setR^n\to \setR^n$.  We also use the convention that  $\Div\bfF:=(\Div \bfF^1,....,\Div \bfF^n)^\top$. Moreover, we define for a function $\bff:\setR^n\to \setR$, $\tilde{\bff}=(\bff_1,...,\bff_{n-1})^\top$
and $\tilde{\bfF}=(\bfF^1,....,\bfF^{n-1})$.
\subsection{Classical function spaces}
Let $\mathcal O\subset\R^m$, $m\geq 1$, be open.
We denote as usual by $L^p(\mathcal O)$ and $W^{k,p}(\mathcal O)$ for $p\in[1,\infty]$ and $k\in\mathbb N$ Lebesgue and Sobolev spaces over $\mathcal O$. For a bounded domain $\mathcal O$ the space $L^p_\perp(\mathcal O)$ denotes the subspace of  $L^p(\mathcal O)$ of functions with zero mean, that is $(f)_{\mathcal O}:=\dashint_{\mathcal O}f\dx:=\mathcal L^m(\mathcal O)^{-1}\int_{\mathcal O}f\dx=0$ with the $m$-dimensional Lebesgue measure $\mathcal L^m$.

 We denote by $W^{k,p}_0(\mathcal O)$ the closure of the smooth and compactly supported functions in $W^{k,p}(\mathcal O)$. If $\partial\mathcal O$ is regular enough, this coincides with the functions vanishing $\mathcal H^{m-1}$ -a.e. on $\partial\mathcal O$. 
 We also denote by $W^{-k,p}(\mathcal O)$ the dual of $W^{k,p}_0(\mathcal O)$. Furthermore, $W^{k,p}_n(\mathcal O)$ is defined as the vectorial functions from $W^{k,p}(\mathcal O)$ with $\bfu\cdot\bfn=0$ on $\partial\mathcal O$ (to be understood in the sense of traces).
  Finally, we consider subspaces
$W^{1,p}_{\Div}(\mathcal O)$, $W^{1,p}_{0,\Div}(\mathcal O)$ and $W^{1,p}_{n,\Div}(\mathcal O)$ of divergence-free vector fields which are defined accordingly. The space $L^p_{\Div}(\mathcal O)$ is defined as the closure of the smooth and compactly supported solenoidal functions in $L^p(\mathcal O).$ We will use the shorthand notations $L^p $ and $W^{k,p} $.

Last we introduce for unbounded domains $\mathcal O$ the homogeneous Sobolev spaces $\mathcal D^{k,p}(\mathcal O)$ as the set of all locally $p$-integrable functions with finite $\|\nabla^k\cdot\|_{L^p(\mathcal O)}$-semi norm. Similar to the above $\mathcal D^{k,p}_n(\mathcal O)$ is defined as the set of vectorial functions from $\mathcal D^{k,p}(\mathcal O)$ with $\bfu\cdot\bfn=0$ on $\partial\mathcal O$
along with its solenoidal variant $\mathcal D_{n,\Div}^{k,p}(\mathcal O)$, where we only take solenoidal functions from $\mathcal D^{k,p}(\mathcal O)$. The spaces  $\mathcal D^{-k,p}(\mathcal O)$ and  $\mathcal D^{-k,p}_n(\mathcal O)$ for $k\in\N$ are defined as the corresponding dual spaces.

\subsection{Fractional differentiability and Sobolev mulitpliers}\label{sec:SM}
For $p\in[1,\infty)$ the fractional Sobolev space (Sobolev-Slobodeckij space) with differentiability $s>0$ with $s\notin\mathbb N$ will be denoted by $W^{s,p}(\mathcal O)$. For $s>0$ we write $s=\lfloor s\rfloor+\lbrace s\rbrace$ with $\lfloor s\rfloor\in\N_0$ and $\lbrace s\rbrace\in(0,1)$.
 We denote by $W^{s,p}_0(\mathcal O)$ the closure of the smooth and compactly supported functions in $W^{1,p}(\mathcal O)$. For $s>\frac{1}{p}$ this coincides with the functions vanishing $\mathcal H^{m-1}$ -a.e. on $\partial\mathcal O$ provided $\partial\mathcal O$ is regular enough. We also denote by $W^{-s,p}(\mathcal O)$ for $s>0$ the dual of $W^{s,p}_0(\mathcal O)$. Similar to the case of unbroken differentiabilities above we use the shorthand notation $W^{s,p} $. The homogenous space
$\mathcal D^{s,p}(\mathcal O)$ for $s=\lfloor s\rfloor+\lbrace s\rbrace$ is defined via the semi-norm $\|\nabla^{\lfloor s\rfloor}\cdot\|_{W^{\lbrace s\rbrace,p}}$. Spaces with $s<0$ are defined as the corresponding dual spaces.

We will denote by $\bfB^s_{p,q}(\R^m)$ the standard Besov spaces on $\R^m$ with differentiability $s>0$, integrability $p\in[1,\infty]$ and fine index $q\in[1,\infty]$. They can be defined (for instance) via Littlewood-Paley decomposition leading to the norm $\|\cdot\|_{\bfB^s_{p,q}(\R^m)}$. 
 We refer to \cite{RuSi} and \cite{Tr,Tr2} for an extensive picture. 
 The Besov spaces $\bfB^s_{p,q}(\mathcal O)$ for a bounded domain $\mathcal O\subset\R^m$ are defined as the restriction of functions from $\bfB^s_{p,q}(\R^m)$, that is
 \begin{align*}
 \bfB^s_{p,q}(\mathcal O)&:=\{f|_{\mathcal O}:\,f\in \bfB^s_{p,q}(\R^m)\},\\
 \|g\|_{\bfB^s_{p,q}(\mathcal O)}&:=\inf\{ \|f\|_{\bfB^s_{p,q}(\R^m)}:\,f|_{\mathcal O}=g\}.
 \end{align*}
 If $s\notin\mathbb N$ and $p\in[1,\infty)$ we have $\bfB^s_{p,p}(\mathcal O)=W^{s,p}(\mathcal O)$.
 
In accordance with \cite[Chapter 14]{MaSh} the Sobolev multiplier norm  is given by
\begin{align}\label{eq:SoMo}
\|\varphi\|_{\mathscr M^{s,p}(\mathcal O)}:=\sup_{\bfv:\,\|\bfv\|_{W^{s-1,p}(\mathcal O)}=1}\|\nabla\varphi\cdot\bfv\|_{W^{s-1,p}(\mathcal O)},
\end{align}
where $p\in[1,\infty]$ and $s\geq1$.
The space $\mathscr M^{s,p}(\mathcal O)$ of Sobolev multipliers is defined as those objects for which the $\mathscr M^{s,p}(\mathcal O)$-norm is finite. For $\delta>0$ we denote by $\mathscr M^{s,p}(\mathcal O)(\delta)$ the subset of functions from
$\mathscr M^{s,p}(\mathcal O)$ with $\mathscr M^{s,p}(\mathcal O)$-norm not exceeding $\delta$. By mathematical induction with respect to $s$ one can prove for Lipschitz-continuous functions $\varphi$ that membership to $\mathscr M^{s,p}(\mathcal O)$  in the sense of \eqref{eq:SoMo} implies that
\begin{align}\label{eq:SoMo'}
\|\varphi\|_{\mathscr M^{s,p}_{\tt or}(\mathcal O)}:=\sup_{w:\,\|w\|_{W^{s,p}(\mathcal O)}=1}\|\varphi \,w\|_{W^{s,p}(\mathcal O)}<\infty.
\end{align}
The quantity \eqref{eq:SoMo'}, even defined for all $s\geq0$, also serves as customary definition of the Sobolev multiplier norm in the literature but \eqref{eq:SoMo} is more suitable for our purposes. 
Note that in our applications we always assume that the functions in question are Lipschitz continuous such that the implication above is given. We also consider multipliers between Sobolev spaces $W^{s_2,p}$ and $W^{s_1,p}$ with different differentiabilities\footnote{Clearly, one can also consider different integrabilities but this is not needed for our purposes.} $s_2\geq s_1$
\begin{align}\label{eq:SoMo''}
\|\varphi\|_{\mathscr M^{s_1,p}_{s_2,p}(\mathcal O)}:=\sup_{w:\,\|w\|_{W^{s_2,p}(\mathcal O)}=1}\|\varphi \,w\|_{W^{s_1,p}(\mathcal O)}<\infty,
\end{align}
where even negative $s_1$ can be considered.
This leads to a multiplier space $\mathscr M^{s_1,p}_{s_2,p}(\mathcal O)$, where $\mathscr M^{s,p}_{s,p}(\mathcal O)=\mathscr M^{s,p}_{\tt or}(\mathcal O)$ for $s\geq0$.

Let us finally collect some useful properties of Sobolev multipliers. By Sobolev's embedding
one easily checks that 
a function belongs to $\mathscr M^{s,p}(\R^m)$
provided that one of the following conditions holds for some $\varepsilon>0$:
\begin{itemize}
\item $p(s-1)<m$ and $\phi\in \bfB^{s+\varepsilon}_{\varrho,p}(\R^{m})$ with $\varrho\in\big[\frac{pm}{p(s-1)-1},\infty\big]$;
\item $p(s-1)=m$ and $\phi\in\bfB^{s+\varepsilon}_{\varrho,p}(\R^m)$ with $\varrho\in(p,\infty]$.
\end{itemize}
In some cases, important for the parametrisation of the boundary of an $n$-dimensional domain, this statement can be sharpened.
By \cite[Corollary 14.6.2]{MaSh} we have for $\phi\in\bfB^{s}_{\varrho,p}(\R^{n-1})$ compactly supported and $\delta\ll1$ that
\begin{align}\label{eq:MSa}
\|\phi\|_{\mathscr M^{s,p}(\R^{n-1})}\leq\,c(\|\phi\|_{\bfB^{s}_{\varrho,p}(\R^{n-1})})\delta,
\end{align}
provided that $\|\nabla\phi\|_{L^{\infty}(\R^{n-1})}\leq\delta$, $s=l-1/p$ for some $l\in\N$ and one of the following conditions holds:
\begin{itemize}
\item $p(l-1)<n$ and $\phi\in \bfB^{s}_{\varrho,p}(\R^{n-1})$ with $\varrho\in\big[\frac{p(n-1)}{p(l-1)-1},\infty\big]$;
\item $p(l-1)=n$ and $\phi\in\bfB^{s}_{\varrho,p}(\R^{n-1})$ with $\varrho\in(p,\infty]$.
\end{itemize}
By \cite[Corollary 4.3.8]{MaSh} it holds
\begin{align}\label{eq:MSb}
\|\phi\|_{\mathscr M^{s,p}(\R^{m})}\approx
\|\nabla\phi\|_{W^{s-1,p}(\R^{m})} 
\end{align}
for $p(s-1)>m$. 
If $\mathcal O$ is a Lipschitz domain
with diameter $r$ and $ps>m$ we have
\begin{align}\label{eq:MSc}
\|\phi\|_{\mathscr M^{s,p}_{\tt or}(\mathcal O)}\approx
r^{s-m/p}\|\phi\|_{W^{s,p}(\mathcal O)},
\end{align}
cf.~\cite[Theorem 9.6.1. (ii)]{MaSh}.

Finally, we note the following rule about the composition with Sobolev multipliers which is a consequence of \cite[Lemma 9.4.1]{MaSh}. For open sets $\mathcal O_1,\mathcal O_2\subset\R^m$, $u\in W^{s,p}(\mathcal O_2)$ and a Lipschitz continuous function $\bfphi:\mathcal O_1\rightarrow\mathcal O_2$ with Lipschitz continuous inverse and $\bfphi\in \mathscr M^{s,p}(\mathcal O_1)$ we have
\begin{align}\label{lem:9.4.1}
\|u\circ\bfphi\|_{W^{s,p}(\mathcal O_1)}\lesssim \|u\|_{W^{s,p}(\mathcal O_2)}
\end{align}
with constant depending on $\bfphi$. Using Lipschitz continuity
of $\bfphi$ and $\bfphi^{-1}$, estimate \eqref{lem:9.4.1} is obvious for $s\in(0,1]$. The general case can be proved by mathematical induction with respect to $s$. Clearly,
inequality \eqref{lem:9.4.1} also holds for $s\in(-1,1)$ if $\bfphi$ is a  Lipschitz function. 

\subsection{Parametrisation of domains}\label{sec:para}
We follow the presentation from \cite[Section 3]{Br}.
 Let $\Omega\subset\R^n$ be a bounded open set.
We assume that $\partial{\Omega}$ can be covered by a finite
number of open sets $\mathcal U^1,\dots,\mathcal U^\ell$ for some $\ell\in\mathbb N$, such that
the following holds. For each $j\in\{1,\dots,\ell\}$ there is a reference point
$y^j\in\R^n$ and a local coordinate system $\{e^j_1,\dots,e_n^j\}$ (which we assume
to be orthonormal and set $\mathcal Q_j=(e_1^j|\dots |e_n^j)\in\mathbb R^{n\times n}$), a function
$\varphi_j:\mathbb R^{n-1}\rightarrow\mathbb R$
and $r_j>0$
with the following properties:
\begin{enumerate}[label={\bf (A\arabic{*})}]
\item\label{A1} There is $h_j>0$ such that
$$\mathcal U^j=\{x=\mathcal Q_jz+y^j\in\mathbb R^n:\,z=(z',z_n)\in\R^n,\,|z'|<r_j,\,
|z_n-\varphi_j(z')|<h_j\}.$$
\item\label{A2} For $x\in\mathcal U^j$ we have with $z=\mathcal Q_j^\top(x-y^j)$
\begin{itemize}
\item $x\in\partial{\Omega}$ if and only if $z_n=\varphi_j(z')$;
\item $x\in{\Omega}$ if and only if $0<z_n-\varphi_j(z')<h_j$;
\item $x\notin{\Omega}$ if and only if $0>z_n-\varphi_j(z')>-h_j$.
\end{itemize}
\item\label{A3} We have that
$$\partial{\Omega}\subset \bigcup_{j=1}^\ell\mathcal U^j,\text{ with  $\mathcal U^j$ having finite overlap.}$$
\end{enumerate}
In other words, for any $x_0\in\partial{\Omega}$ there is a neighborhood $U$ of $x_0$ and a function $\varphi:\mathbb R^{n-1}\rightarrow\mathbb R$ such that after translation and rotation\footnote{By translation via $y_j$ and rotation via $\mathcal Q_j$ we can assume that $x_0=0$ and that the outer normal at~$x_0$ is pointing in the negative $x_n$-direction.}
 \begin{align}\label{eq:3009}
 U \cap {\Omega} = U \cap G,\quad G = \set{(x',x_n)\in \R^n \,:\, x' \in \R^{n-1}, x_n > \varphi(x')}.
 \end{align}
 The regularity of $\partial{\Omega}$ will be described by means of local coordinates as just described.
 \begin{definition}\label{def:besovboundary}
 Let ${\Omega}\subset\R^n$ be a bounded domain, $s\geq 1$ and $1\leq p\leq\infty$. We say that $\partial{\Omega}$ belongs to the class $\mathscr M^{s,p}$ if there is $\ell\in\mathbb N$ and functions $\varphi_1,\dots,\varphi_\ell\in\mathscr M^{s,p}(\mathbb R^{n-1})$ satisfying \ref{A1}--\ref{A3}.
 \end{definition}
Clearly, a similar definition applies for
various other function spaces. In particular, we say that $\partial{\Omega}$ belongs to the class $\mathscr M^{s,p}( \delta)$ for some $\delta>0$ (or $B^s_{p,q}$ with $1\leq q\leq \infty$), if there exist $\mathcal U^1,\dots,\mathcal U^\ell$ with maximal overlap $m$ and $\varphi_1,\dots,\varphi_\ell\in\mathscr M^{s,p}(\mathbb R^{n-1})(\frac{\delta}{m})$ (or in $B^s_{p,q}(\R^{n-1})$).
Also, we speak about a Lipschitz boundary (or sometimes a $C^{1,\alpha}$-boundary with $\alpha\in(0,1)$) by requiring that $\varphi_1,\dots,\varphi_\ell\in W^{1,\infty}(\mathbb R^{n-1})$ (or $\varphi_1,\dots,\varphi_\ell\in C^{1,\alpha}(\mathbb R^{n-1})$). We say that the local Lipschitz constant of $\partial{\Omega}$, denoted by $\mathrm{Lip}(\partial{\Omega})$, is (smaller or) equal to some number $L>0$ provided the Lipschitz constants of $\varphi_1,\dots,\varphi_\ell$ are not exceeding $L$. Our main result depends on the assumption of a sufficiently small local Lipschitz constant. 
It holds, for instance, if the regularity
of $\partial{\Omega}$ is better than Lipschitz (such as $C^{1,\alpha}$ for some $\alpha>0$). 

In order to describe the behaviour of functions defined in ${\Omega}$ close to the boundary we need to extend the functions $\varphi_1,\dots,\varphi_\ell$  from \ref{A1}--\ref{A3} to the half space
$\mathbb H := \set{z = (z',z_n)\,:\, z_n > 0}$. Hence we are confronted with the task of extending a function~$\phi\,:\, \R^{n-1}\to \R$ to a mapping $\bfPhi\,:\, \mathbb H \to \R^n$ that maps the 0-neighborhood in~$\mathbb H$ to the $x_0$-neighborhood in~${\Omega}$. The mapping $(z',0) \mapsto (z',\phi(z'))$ locally maps the boundary of~$\mathbb H$ to the one of~$\partial {\Omega}$. We extend this mapping using the extension operator of Maz'ya and Shaposhnikova~\cite[Section 9.4.3]{MaSh}. Let $\zeta \in C^\infty_c(B_1(0'))$ with $\zeta \geq 0$ and $\int_{\R^{n-1}} \zeta(x')\dx'=1$. Let $\zeta_t(x') := t^{-(n-1)} \zeta(x'/t)$ denote the induced family of mollifiers. We define the extension operator 
\begin{align*}
  (\mathcal{T}\phi)(z',z_n)=\int_{\R^{n-1}} \zeta_{z_n}(z'-y')\phi(y')\dy',\quad (z',z_n) \in \mathbb H,
\end{align*}
where~$\phi:\R^n\to \R$ is a Lipschitz function with Lipschitz constant~$K$.
Then the estimate 
\begin{align}\label{est:ext}
  \norm{\nabla (\mathcal{T} \phi)}_{\bfB_{\rho,q}^{s}(\setR^{n})}\le c\norm{\nabla \phi}_{\bfB_{\rho,q}^{s-\frac 1 p}(\setR^{n-1})}
\end{align}
for $s\geq1+\frac{1}{p}$ and $\rho,q\in[1,\infty]$ follows from~\cite[Section 9.4.3]{MaSh}. Moreover, \cite[Theorem 8.8.1]{MaSh} yields
\begin{align}\label{eq:MS}
\|\mathcal T\phi\|_{\mathscr M^{s,p}(\mathbb H)}\lesssim \|\phi\|_{\mathscr M^{s-1/p,p}(\R^{n-1})}
\end{align}
for $s\geq1+\frac{1}{p}$ and $p\in[1,\infty)$.
It is shown in \cite[Lemma 9.4.5]{MaSh} that (for sufficiently large~$N$, i.e., $N \geq c(\zeta) K+1$) the mapping
\begin{align*}
  \alpha_{z'}(z_n) \mapsto N\,z_n+(\mathcal{T} \phi)(z',z_n)
\end{align*}
is for every $z' \in \setR^{n-1}$ one to one and the inverse is Lipschitz with its gradient
bounded by $(N-K)^{-1}$.
Now, we define the mapping~$\bfPhi\,:\, \mathbb H \to \R^n$ as a rescaled version of the latter one by setting
\begin{align}\label{eq:Phi}
  \bfPhi(z',z_n)
  &:=
    \big(z',
    \alpha_{z'}(z_n)\big) = 
    \big(z',
    \,z_n + (\mathcal{T} \phi)(z',z_n/K)\big).
\end{align}
Thus, $\bfPhi$ is one-to-one (for sufficiently large~$N=N(K)$) and we can define its inverse $\bfPsi := \bfPhi^{-1}$.
The mapping $\bfPhi$ has the Jacobi matrix of the form
\begin{align}\label{J}
  J = \nabla \bfPhi = 
  \begin{pmatrix}
    \mathbb I_{(n-1)\times (n-1)}&0
    \\
    \partial_{z'} (\mathcal{T}  \phi)& 1+ 1/N\partial_{z_n}\mathcal{T}  \phi
  \end{pmatrix}.
\end{align}
Since 
$\abs{\partial_{z_n}\mathcal{T}  \phi} \leq K$, we have \begin{align}\label{eq:detJ}\frac{1}{2} < 1-K/N \leq \abs{\det(J)} \leq 1+K/N\leq 2\end{align}
using that $N$ is large compared to~$K$. Finally, we note the implication
\begin{align} \label{eq:SMPhiPsi}
\bfPhi\in\mathscr M^{s,p}(\mathbb H))\,\,\Rightarrow \,\,\bfPsi\in\mathscr M^{s,p}(\mathbb H),
\end{align}
which holds, for instance, if $\bfPhi$ is Lipschitz continuous, cf. \cite[Lemma 9.4.2]{MaSh}. In fact, one can prove \eqref{eq:SMPhiPsi} with the help of \eqref{lem:9.4.1} and \eqref{eq:detJ}.

\subsection{The problem in the half space}
\label{sec:half}
In this subsection we study the Stokes equations in the half space $$\mathbb H:=\{x=(x',x_n)\in\R^n:\,\,x_n>0\}$$ with perfect slip boundary conditions on $$\partial\mathbb H=\{(x',0)\in\R^n:\,x'\in\R^{n-1}\},$$ that is
\begin{align}\label{eq:Stokeshalf}
\begin{aligned}
&-\Delta \bfu+\nabla\pi=\bff+\Div\bfF,\quad\Div\bfu=h,\quad&\text{in}\quad\mathbb H,\\
&u^n=\mathfrak g,\quad \partial_n \tilde{\bfu}+\tilde\bfF_n=\GG,\quad&\text{on}\quad\partial\mathbb H,
\end{aligned}
\end{align}
for some given tensor $\bfF$ and functions $\bff$, $h$, $\mathfrak g$ with $\int_{\mathbb H}h\dx=\int_{\partial\mathbb H}\mathfrak g\,\dd\mathcal H^{n-1}$ and $\GG$. 
 
We may assume that $\mathfrak g=0=h$. Otherwise, we solve the Neumann problem
\begin{align*}
\Delta\vartheta=h\quad\text {in}\quad\mathbb H,\quad\partial_n\vartheta=\mathfrak g\quad\text{on}\quad\partial\mathbb H,
\end{align*}
and replace $\bfu$ by $\bfu-\nabla\vartheta$. It is classical that
\begin{align*}
 \|\nabla\vartheta\|_{L^p(\mathbb H)}\lesssim \|\mathfrak g\|_{W^{1-1/p,p}(\partial\mathbb H)}+\|h\|_{W^{-1,p}(\mathbb H)},\quad  \|\nabla^2\vartheta\|_{L^p(\mathbb H)}\lesssim \|\mathfrak g\|_{W^{2-1/p,p}(\partial\mathbb H)}+\|h\|_{L^p(\mathbb H)},
\end{align*}
provided $\mathfrak g, h$ belong to the corresponding spaces. 
Finally, in order to reduce the problem to the case $\GG-\tilde{\bfF}_n =0$ we employ
we employ an extension. For $\GG^i-\tilde{\bfF}_n^i\in W^{-1/p,p}(\mathbb H)$, $i=1,2$, there functions
$\boldsymbol{\mathfrak K}^i\in L^p(\mathbb H)$ with
$(\boldsymbol{\mathfrak K}^i)^n=\mathfrak G^i$.
In particular, it holds
\begin{align}\label{eq:09.10}
 \|\boldsymbol{\mathfrak K}^i\|_{L^p(\mathbb H)}+ \|\Div\boldsymbol{\mathfrak K}^i\|_{L^p(\mathbb H)}\lesssim \|\GG-\tilde{\bfF}_n\|_{W^{-1/p,p}(\partial\mathbb H)},
\end{align}
see \cite[Chapter I]{Te} and \cite[page 26]{So}. Now we solve the auxiliary problem
\begin{align*}
-\Delta {\mathfrak u}^i=(\boldsymbol{\mathfrak K}^i)^n\quad\text {in}\quad\mathbb H,\quad {\mathfrak u}^i=0\quad\text{on}\quad\partial\mathbb H,\quad i=1.,\dots, n-1,
\end{align*}
and replace $\bfu$ by 
\begin{align*}
\bfw=\begin{pmatrix}
u^1+\partial_n\mathfrak u^1\\
\vdots\\
u^{n-1}+\partial_n\mathfrak u^{n-1}\\
u^n-\sum_{i=1}^{n-1}\partial_i\mathfrak u^i
\end{pmatrix},
\end{align*}
which is divergence free. Further one easily checks that this function is divergence-free, and satisfies $\partial_n\tilde\bfw=0$ and $w_n=0$ on $\partial\mathbb H$. Moreover, it holds by well-known estimates for the Laplace equation
and \eqref{eq:09.10}
\begin{align*}
\|\nabla\bfu\|_{L^p(\mathbb H)}&\lesssim \|\nabla\bfw\|_{L^p(\mathbb H)}+\sum_{i=1}^{n-1}\|\nabla^2\mathfrak u^i\|_{L^p(\mathbb H)}\\
&\lesssim \|\nabla\bfw\|_{L^p(\mathbb H)}+\sum_{i=1}^{n-1}\|(\mathfrak K^i)^n\|_{L^p(\mathbb H)}\lesssim \|\nabla\bfw\|_{L^p(\mathbb H)}+\|\GG-\tilde{\bfF}_n\|_{W^{-1/p,p}(\partial\mathbb H)},
\end{align*}
and similarly for higher order derivatives. Hence we moved all data into one single right hand side in divergence form, which has the appropriate boundary values.

The weak formulation for the problem in divergence-form with $h=\mathfrak g=0$ and $\GG=\tilde{\bfF}_n=0$ we are looking for a function $\bfu$ such that
\begin{align}\label{eq:weak}
\int_{\mathbb H}\boldsymbol{\varepsilon}(\bfu):\boldsymbol{\varepsilon}(\bfphi)\dx=\int_{\mathbb H}\nabla\bfu:\nabla\bfphi\dx
=
-\int_{\mathbb H}\bfF:\nabla\bfphi\dx
\end{align}
for all $\bfphi\in \mathcal D^{1,2}_{n,\Div}(\mathbb H)$, where $\boldsymbol{\varepsilon}=\frac{1}{2}(\nabla+\nabla^\top)$ denotes the symmetric gradient.
A solution $\bfu$ to \eqref{eq:weak} exists in the homogeneous space $\mathcal D^{1,2}_{n,\Div}(\mathbb H)$ provided $\bfF\in L^2(\mathbb H)$
 as can be shown by Riesz' theorem in Hilbert spaces. The velocity field is unique. The pressure function $\pi$ from \eqref{eq:Stokeshalf} can be recovered in the space $L^2(\mathbb H)$ and is unique up to a constant. 
One easily realizes that this system can be reflected to the whole space. Indeed by defining 
 \begin{align*}
 \tilde{\bfw}(x',z)&=\tilde{\bfu}(x',z)\text{ when }z\geq 0\text{ and }\tilde{\bfw}(x',z)=\tilde{\bfu}(x',-z)\text{ when }z<0
 \\
w_n(x',z)&=u_n(x',z)\text{ when }z\geq 0\text{ and }w_n(x',z)=-u(x',-z)\text{ when }z<0
\\
\mathfrak q(x',z)&=\pi(x',z)\text{ when }z\geq 0\text{ and }\mathfrak q(x',z)=\pi(x',-z)\text{ when }z<0
\\
\tilde{\bfG}_i(x',z)&=\tilde{\bfF}_i(x',z)\text{ when }z\geq 0\text{ and }\tilde{\bfG}_i(x',z)=\tilde{\bfF}_i(x',-z)\text{ when }z<0\text{ for }i\neq n
 \\
\tilde{\bfG}_n(x',z)&=\tilde{\bfF}_n(x',z)\text{ when }z\geq 0\text{ and }\tilde{\bfG}_n(x',z)=-\tilde{\bfF}_n(x',-z)\text{ when }z<0
\\
\bfG_i^n(x',z)&={\bfF}_i^n(x',z)\text{ when }z\geq 0\text{ and }\tilde{\bfG}_i^n(x',z)=-\tilde{\bfF}_i^n(x',-z)\text{ when }z<0\text{ for }i\neq n
\\
\bfG_n^n(x',z)&={\bfF}_n^n(x',z)\text{ when }z\geq 0\text{ and }\tilde{\bfG}_n^n(x',z)=\tilde{\bfF}_n^n(x',-z)\text{ when }z<0
 \end{align*}
 we find formally that
 \[
 -\Delta \bfw +\nabla \mathfrak q=\Div \bfG\text{ and }\quad \Div \bfw=0\text{ in }\setR^n.
 \]
  Moreover, if $\bfF$ was continuous on $\mathbb{H}$, then $\bfG$ is continuous at
$\setR^n$. It is also true for weak solutions. Hence we can use the classical theory, see \cite[Chapter IV]{Ga}.
We obtain the following result using \cite[Chapter IV]{Ga} and real interpolation.
\begin{theorem}\label{thm:StokesH}
 \begin{enumerate}
\item[(a)]
Let $q\in(1,\infty)$, $s\in[2,\infty)$ and  
suppose that we have $\bff\in W^{s-2,q}\cap \mathcal D_n^{-1,2}(\mathbb H)$, $h\in L^1\cap\mathcal D^{s,q}(\mathbb H)$, $\mathfrak g\in L^1\cap W^{s-1/q,q}(\partial\mathbb H)$  with $\int_{\mathbb H}h\dx=\int_{\partial\mathbb H}\mathfrak g\,\dd\mathcal H^{n-1}$ and $\GG\in W^{s-1-1/q,q}(\partial\mathbb H)$.
There is a unique solution
 $(\bfu,\pi)\in \mathcal D^{s,q}_{n,\Div}(\mathbb H)\times \mathcal D^{s-1,q}(\mathbb H)$ to \eqref{eq:Stokeshalf} and we have
\begin{align}\label{eq:mainH}
\begin{aligned}
\|\nabla^2\bfu\|_{W^{s-2,q}(\mathbb H)}+\|\nabla\pi\|_{W^{s-2,q}({\mathbb H})}&\lesssim\|\bff\|_{W^{s-2,p}({\mathbb H})}+\|h\|_{W^{s-1,q}(\partial\mathbb H)}\\
&+\|\mathfrak g\|_{W^{s-1/q,q}(\partial\mathbb H)}+\|\GG\|_{W^{s-1-1/q,q}(\partial\mathbb H)}.
\end{aligned}
\end{align}
\item[(b)]
Let $q\in(1,\infty)$, $s\in[1,\infty)$ and  
suppose that we have $\bfF\in W^{s-1,q}(\mathbb H)$, $h\in L^1\cap\mathcal D^{s,q}(\mathbb H)$, $\mathfrak g\in L^1\cap W^{s-1/q,q}(\partial\mathbb H)$  with $\int_{\mathbb H}h\dx=\int_{\partial\mathbb H}\mathfrak g\,\dd\mathcal H^{n-1}$ and $\GG\in W^{s-1-1/q,q}(\partial\mathbb H)$.
There is a unique solution
 $(\bfu,\pi)\in \mathcal D^{s,q}_{n,\Div}(\mathbb H)\times \mathcal D^{s-1,q}(\mathbb H)$ to \eqref{eq:Stokeshalf} and we have
\begin{align}\label{eq:mainHF}
\begin{aligned}
\|\nabla\bfu\|_{W^{s-1,q}(\mathbb H)}+\|\pi\|_{W^{s-1,q}({\mathbb H})}&\lesssim\|\bfF\|_{W^{s-1,p}({\mathbb H})}+\|h\|_{W^{s-1,q}(\partial\mathbb H)}\\
&+\|\mathfrak g\|_{W^{s-1/q,q}(\partial\mathbb H)}+\|\GG\|_{W^{s-1-1/q,q}(\partial\mathbb H)}.
\end{aligned}
\end{align}
\end{enumerate}
\end{theorem}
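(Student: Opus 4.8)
The plan is to reduce everything to the classical Cattabriga--Galdi theory for the full-space Stokes system via the reflection that was already set up in the text, and then to recover the claimed range of fractional exponents by real interpolation. First I would dispose of the data $h$ and $\mathfrak g$: as explained before the statement, one solves the Neumann problem $\Delta\vartheta=h$ in $\mathbb H$, $\partial_n\vartheta=\mathfrak g$ on $\partial\mathbb H$, and replaces $\bfu$ by $\bfu-\nabla\vartheta$; the compatibility condition $\int_{\mathbb H}h\dx=\int_{\partial\mathbb H}\mathfrak g\,\dd\mathcal H^{n-1}$ makes this problem solvable, and the classical estimates for the Neumann Laplacian quoted in the text (together with their higher-order analogues, which I would state for general $s$ by interpolation) control $\|\nabla^k\vartheta\|$ by the asserted norms of $h$ and $\mathfrak g$. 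Note $\Div(\bfu-\nabla\vartheta)=0$ and $(\bfu-\nabla\vartheta)\cdot\bfn$, $\partial_n(\widetilde{\bfu-\nabla\vartheta})$ pick up only lower-order boundary contributions that are again absorbed. Next, as in the paragraph before the statement, I would move the boundary datum $\GG-\tilde{\bfF}_n$ into the bulk force in divergence form using the extension $\boldsymbol{\mathfrak K}^i$ satisfying \eqref{eq:09.10} and solving the auxiliary Dirichlet--Laplace problems for $\mathfrak u^i$; after subtracting $\bfw$ as displayed, the reduced problem has $h=\mathfrak g=0$ and homogeneous Navier data, at the cost of the term $\|\GG-\tilde{\bfF}_n\|_{W^{s-1-1/q,q}(\partial\mathbb H)}$ on the right-hand side (for part (b) the analogous reduction keeps everything at one derivative lower).

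With a homogeneous Navier problem in hand, I would apply the reflection across $\{x_n=0\}$ exactly as written in the text: the even/odd extension of the tangential/normal components of $\bfu$, of $\pi$, and the correspondingly signed extension of $\bfF$ produce $(\bfw,\mathfrak q,\bfG)$ solving $-\Delta\bfw+\nabla\mathfrak q=\Div\bfG$, $\Div\bfw=0$ in $\R^n$. The key point I must check is that the boundary conditions $w_n=0$ and $\partial_n\tilde\bfw=0$ are precisely the matching conditions which guarantee that the reflected fields solve the equation across the hyperplane in the sense of distributions — i.e. no singular boundary layer is created, because the normal component is continuous and vanishes, the tangential components are continuous with vanishing normal derivative (so $C^1$ across), and the parity of $\bfG$ is chosen so that $\Div\bfG$ has the matching parity on each component. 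This is a routine but careful distributional computation component by component, using that $\pi$ reflects evenly (consistent with $\nabla\mathfrak q$ having the right parities) and that the Laplacian commutes with even reflection of an $H^1$ function whose normal derivative vanishes and with odd reflection of an $H^1$ function that vanishes on the hyperplane.

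Having reflected, part (b) for $s=1$, $q$ arbitrary is exactly the classical $L^q$ estimate $\|\nabla\bfw\|_{L^q(\R^n)}+\|\mathfrak q\|_{L^q(\R^n)}\lesssim\|\bfG\|_{L^q(\R^n)}$ from \cite[Chapter IV]{Ga}, and restricting back to $\mathbb H$ and undoing the two reductions gives \eqref{eq:mainHF} for $s=1$; similarly part (a) for $s=2$ is the classical $W^{2,q}$ estimate. To reach general $s\in[1,\infty)$ (resp. $[2,\infty)$) I would first get integer values $s\in\N$ by differentiating the equation tangentially and in the normal direction and iterating the base estimate — here one uses that the full-space solution operator commutes with all derivatives, so no boundary terms appear — and then fill in the non-integer values by real interpolation between consecutive integers, invoking $\bfB^\theta_{q,q}=W^{\theta,q}$ and the fact that all the spaces in \eqref{eq:mainH}--\eqref{eq:mainHF}, including the trace spaces on $\partial\mathbb H$, form compatible interpolation scales. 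Uniqueness of $\bfu$ in $\mathcal D^{s,q}_{n,\Div}(\mathbb H)$ follows from the $s=1$, $q=2$ energy argument already noted (Riesz representation) since any two solutions differ by a field with zero symmetric gradient, zero normal trace and finite Dirichlet energy, hence zero; $\pi$ is then determined up to a constant by the equation. The main obstacle I anticipate is not any single step but the bookkeeping: verifying that each of the two preliminary reductions is compatible with the full scale of $s$ (in particular that the Neumann and Dirichlet Laplace estimates hold in the fractional spaces $\mathcal D^{s,q}$, $W^{s-1/q,q}$ with the stated exponents) and that the reflection produces a genuine distributional solution on all of $\R^n$ rather than merely a function satisfying the equation away from the hyperplane.
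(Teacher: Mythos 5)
Your proposal follows essentially the same route as the paper: the preliminary reductions via the Neumann problem for $h,\mathfrak g$ and the extension $\boldsymbol{\mathfrak K}^i$ for $\GG-\tilde\bfF_n$, the even/odd reflection across $\partial\mathbb H$ to a full-space Stokes system, the classical estimates of \cite[Chapter IV]{Ga} for the base cases, and real interpolation to reach the full range of $s$, with uniqueness from the energy formulation. Your additional care about the distributional validity of the reflected equation and the compatibility of the reductions across the fractional scale is exactly the bookkeeping the paper leaves implicit, so the argument is correct and matches the paper's.
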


\section{The problem in divergence form}\label{sec:stokessteadyF}
In this section we consider the steady Stokes system
\begin{align}\label{eq:Stokesdiv}
\begin{aligned}
\Delta \bfu-\nabla\pi=-\Div\bfF,\quad\Div\bfu=h,\quad\text{in}\quad\Omega\\
\bfu\cdot\bfn=\mathfrak g,\quad (\nabla\bfu\,\bfn+\bfF\,\bfn)_{\bftau}+\alpha \bfu_\bftau=\GG,\quad\text{on}\quad\partial\Omega,
\end{aligned}
\end{align}
in a domain ${\Omega}\subset\R^n$, where $\bfv_\bftau:=\bfv-(\bfv\cdot\bfn)\bfn$. The result given in the following theorem is a maximal regularity estimate for the solution in terms of the right-hand side under minimal assumption on the regularity of $\partial\Omega$ (the corresponding multiplier spaces are introduced in Section \ref{sec:SM}). We start with the case, where $W^{s,p}(\Omega)\hookrightarrow W^{1,2}(\Omega)$ and give a correspnonding result for the remaining parameters below in Corollary \ref{cor:stokessteadyF}.   
\begin{theorem}\label{thm:stokessteadyF}
Let $n=2,3$, $p\in(1,\infty)$ and $s\geq1$ such that $n\big(\frac{1}{p}-\frac{1}{2}\big)+1\leq s$. Assume that
$\alpha:\partial\Omega\rightarrow[0,\infty)$ is a measurable function belonging to the class $\alpha\in \mathscr M_{s-1/p,p}^{s-1-1/p,p}$.
Let $\Omega$ be a bounded Lipschitz domain and suppose
that either
\begin{enumerate}
\item[(a)]  $p(s-1)\leq n$ and $\partial\Omega$ belongs to the class
$\mathscr M^{s+1-1/p,p}(\delta)$ for some sufficiently small $\delta>0$ or
\item[(b)] $p(s-1)> n$ and $\partial\Omega$ belongs to the class
$W^{s+1-1/p,p}$. 
\end{enumerate}
For any $\bfF\in W^{s-1,p}({\Omega})$ , $h\in W^{s-1,p}(\Omega)$, $\mathfrak g\in W^{s-1/p,p}(\partial\Omega)$ with $\int_{\Omega}h\dx=\int_{\partial\Omega}\mathfrak g\,\dd\mathcal H^{n-1}$ and $\GG\in W^{s-1-1/p,p}(\partial\Omega)$
there is a unique solution to \eqref{eq:Stokesdiv} and we have
\begin{align}\label{eq:maindiv}
\begin{aligned}
\|\bfu\|_{W^{s,p}(\Omega)}+\|\pi\|_{W^{s-1,p}(\Omega)}&\lesssim\|\bfF\|_{W^{s-1,p}(\Omega)}+\|h\|_{W^{s-1,p}(\Omega)}\\&+\|\mathfrak g\|_{W^{s-1/p,p}(\partial\Omega)}+\|\GG\|_{W^{s-1-1/p,p}(\partial\Omega)}.
\end{aligned}
\end{align}
The constant in \eqref{eq:maindiv} depends on the Lipschitz constants as well as the
$\mathscr M^{s+1-1/p,p}$- or $W^{s+1-1/p,p}$-norms of the local charts in the parametrisation of  $\partial\Omega$.
\end{theorem}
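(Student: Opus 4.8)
The strategy is a standard localisation-and-perturbation scheme, with the key technical novelty being how the Sobolev multiplier norm of the charts controls the error terms produced by flattening the boundary. First I would reduce to the case $h=0$, $\mathfrak g=0$ exactly as done in the half-space discussion: solve the Neumann problem $\Delta\vartheta=h$ in $\Omega$, $\partial_n\vartheta=\mathfrak g$ on $\partial\Omega$ (using the rough-domain estimates from Appendix \ref{sec:AB}, which are available because $\partial\Omega\in\mathscr M^{s+1-1/p,p}\subset\mathscr M^{s-1/p,p}$), and replace $\bfu$ by $\bfu-\nabla\vartheta$; the new right-hand side stays controlled in $W^{s-1,p}$. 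One then absorbs the friction term: since $\alpha\in\mathscr M^{s-1-1/p,p}_{s-1/p,p}(\partial\Omega)$, the map $\bfu\mapsto\alpha\bfu_\bftau$ sends $W^{s-1/p,p}(\partial\Omega)$ (the trace space of $W^{s,p}(\Omega)$) to $W^{s-1-1/p,p}(\partial\Omega)$, so for the a priori estimate it can be moved to the data side and treated as a (compact, lower-order) perturbation; uniqueness handles the loss of coercivity via the usual Fredholm/energy argument together with the Korn–Poincaré inequality on $W^{1,2}_{n,\Div}$.

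Next comes the core local analysis. Cover $\partial\Omega$ by the charts $\mathcal U^j$ from Section \ref{sec:para}, pick a subordinate partition of unity, and for each boundary patch transform via $\bfPhi$ (resp. $\bfPsi=\bfPhi^{-1}$) from \eqref{eq:Phi} to the half space $\mathbb H$. Under this pullback the Stokes operator with Navier conditions \eqref{eq:Stokesdiv} becomes the constant-coefficient half-space system \eqref{eq:Stokeshalf} plus commutator terms whose coefficients involve $\nabla\bfPhi-\mathbb I$, i.e. essentially $\nabla(\mathcal T\phi)$, and likewise the boundary operator $(\nabla\bfu\,\bfn)_\bftau$ transforms into $\partial_n\tilde\bfu$ plus an error linear in $\nabla(\mathcal T\phi)$. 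The point of requiring $\partial\Omega\in\mathscr M^{s+1-1/p,p}$ — one derivative more than the no-slip case — is precisely that the top-order term in the boundary condition already carries one derivative of $\bfu$, so after transformation the perturbation coefficients must themselves be multipliers on $W^{s-1/p,p}$; via \eqref{eq:MS}, $\|\mathcal T\phi\|_{\mathscr M^{s+1-1/p,p}(\mathbb H)}\lesssim\|\phi\|_{\mathscr M^{s-1/p,p}(\R^{n-1})}$ would be too weak, and one needs the gain \eqref{est:ext}/\eqref{eq:MSa} at one higher order, which is exactly case (a) with smallness $\delta$, or the embedding \eqref{eq:MSb} into the multiplication algebra in case (b). Applying Theorem \ref{thm:StokesH}(b) on $\mathbb H$ to the localised-and-transformed equation, transforming back via \eqref{lem:9.4.1}, and summing over $j$ together with an interior estimate (interior Calderón–Zygmund for Stokes, which needs no boundary regularity) yields
\begin{align*}
\|\bfu\|_{W^{s,p}(\Omega)}+\|\pi\|_{W^{s-1,p}(\Omega)}\lesssim \text{(data)}+\delta\,\big(\|\bfu\|_{W^{s,p}(\Omega)}+\|\pi\|_{W^{s-1,p}(\Omega)}\big)+\text{l.o.t.},
\end{align*}
whence for $\delta$ small the leading error is absorbed and the lower-order terms are handled by interpolation and the already-established $W^{1,2}$ bound.

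\textbf{Main obstacle.} The delicate point is making the commutator estimate quantitative in the right function-space scale: one must show that multiplication/composition by the chart-induced coefficients maps $W^{s-1,p}(\Omega)\to W^{s-1,p}(\Omega)$ (for the bulk error) and $W^{s-1/p,p}(\partial\Omega)\to W^{s-1-1/p,p}(\partial\Omega)$ (for the boundary error) with small norm, uniformly in the patch, and that the transformed solenoidal constraint $\Div(J^{-1}\bfu\circ\bfPhi)=0$ can be corrected back to $\Div=0$ at cost $\delta\|\bfu\|_{W^{s,p}}$ using a Bogovskii-type operator whose bound itself only needs Lipschitz regularity. This is where the Sobolev-multiplier machinery of Section \ref{sec:SM} — especially \eqref{eq:SoMo''}, \eqref{eq:MSa}, \eqref{eq:MS} and the composition rule \eqref{lem:9.4.1} — does all the work, and where the ``one extra derivative'' assumption enters in an essential, non-negotiable way; the remaining ingredients (partition of unity, interior regularity, Fredholm alternative for uniqueness) are routine once the perturbation is shown to be small and compact.
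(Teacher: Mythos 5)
Your plan follows essentially the same route as the paper's proof: reduction to $h=\mathfrak g=0$ via the rough-domain Neumann problem of Appendix \ref{sec:AB}, localisation by a partition of unity and flattening with the Maz'ya--Shaposhnikova extension \eqref{eq:Phi}, viewing the transformed system as a perturbation of the half-space problem of Theorem \ref{thm:StokesH}, absorbing the perturbation through the smallness of the chart multiplier norms in case (a) or of the support radius via \eqref{eq:MSc} in case (b), and handling lower-order terms by interpolation against the $W^{1,2}$ energy bound (which is exactly where $n(\tfrac1p-\tfrac12)+1\le s$ enters). The only minor deviations are that the paper needs neither a Bogovskii correction nor a Fredholm argument --- the inhomogeneous divergence and the friction term are kept inside the transformed half-space problem, with $\alpha$ estimated directly through $\mathscr M^{s-1-1/p,p}_{s-1/p,p}$ and uniqueness obtained by energy ($p\ge2$) or duality ($p<2$) --- and existence is obtained by mollifying the charts and passing to the limit in the linear problem.
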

\begin{remark}\label{rem:main2}This relationship between Sobolev multipliers and Besov spaces can be seen from \eqref{eq:MSa} and \eqref{eq:MSb}. 
 Suppose that $\Omega$ is a $\bfB^{\sigma+\varepsilon-1/p}_{\varrho,p}$-domain,
where $s+1=\sigma\geq1$ and $\varepsilon>0$,
\begin{align}\label{eq:SMp}
\varrho\geq p\quad\text{if}\quad p(\sigma-1)= n,\quad \varrho\geq \tfrac{p(n-1)}{p(\sigma-1)-1}\quad\text{if}\quad p(\sigma-1)< n,
\end{align}
 with locally small Lipschitz constant.
Then we have that $\partial\Omega$ belongs to the class
$\mathscr M^{\sigma-1/p,p}(\delta)$ for any $\delta>0$. If $\sigma\in\N$ and the Lipschitz constant of $\partial\Omega$ is locally small, we can include the case $\varepsilon=0$. 
In particular, if $s=\sigma-1=1$ and $p\leq n$ we require that $\partial\Omega$ has a locally small Lipschitz constant and belongs to the class
$\bfB^{2-1/p}_{\varrho,p}$, where $\varrho$ is given by \eqref{eq:SMp}. In particular it suffices that $\partial\Omega\in W^{2-\frac{1}{q_1},q_2}$ for some $q_1\leq p$ and $q_2>p$ and possesses locally a small Lipschitz constant.

In case $s=1$ and $p>n$ we find that the optimal boundary regularity is $\bfB^{2-1/p}_{p,p}=W^{2-\frac{1}{p},p}$. Please note that in this case  $C^{1,1}\subset W^{2-\frac{1}{p},p}\subset C^{1,\frac{p-n}{p}}$, locally.
\end{remark}
\begin{remark} 
If $p>2-2/n$, one easily checks that $L^{r}\hookrightarrow\mathscr M_{1-1/p,p}^{-1/p,p}$, where
\begin{align*}
r=\frac{(n-1)p}{np+2-2n}
\end{align*}
Hence Theorem \ref{thm:stokessteadyF} applies for $\alpha\in L^r(\partial\Omega)$.
\end{remark}
\begin{remark}
In the setting of Theorem \ref{thm:stokessteadyF} the localization sets $\mathcal U^1,\dots,\mathcal U^\ell$ can always be chosen such that their diameter and the Lipschitz constant of the $\varphi_j$'s are uniformly small with a fixed number of overlaps. This implies that one can assume, without loss of generality, that the multiplier norm is small on all orders. Indeed, any lower order term can be shown to be small by some interpolation argument involving the diameter or the Lipschitz constant. In particular, the respective assumptions in Besov spaces without smallness are included.
\end{remark}
\begin{remark}
All estimates provided here have their dual equivalent. This is explained in detail in Appendix~\ref{sec:AA}. This means in particular that the multiplier conditions above are bound to commute with duality. The related Bilaplacian~\eqref{eq:bilaplace} provides a clear picture in two space dimensions. Let us assume that the condition $C_{s,p}$ is the necessary boundary regularity for the Poisson equation with zero boundary values in 2D. Please note that by duality for the Laplace equation $C_{2,p}=C_{0,p'}$. Hence, in the case $s=1$, we find that~\eqref{eq:bilaplace} implies, that the boundary regularity has to be in $C_{2,p}\cap C_{0,p}=C_{2,p}\cap C_{2,p'}=C_{0,p'}\cap C_{2,p}$. But that reflects precisely the duality of the Stokes equation with perfect slip boundary conditions.
\end{remark}
\begin{proof}[Proof of Theorem~\ref{thm:stokessteadyF}]
Similarly to Theorem \ref{thm:StokesH} we can reduce the problem to the case $\mathfrak g=h=0$ with the help of Theorem \ref{thm:laplace}.
 Furthermore, let us suppose that $\bfu$ and $\pi$ are sufficiently smooth. We will remove this restriction at the end of the proof by some approximation procedure which will ensure existence of a solution. Uniqueness is obvious if $p\geq2$, while the case $p<2$ can eventually be treated by duality (see Appendix~\ref{sec:AA}).

By assumption there is $\ell\in\mathbb N$ and Lipschitz functions $\varphi_1,\dots,\varphi_\ell\in\mathscr M^{s+1-1/p,p}(\mathbb R^{n-1})(\delta)$, where $\delta$ is sufficiently small, satisfying \ref{A1}--\ref{A3}.
We clearly find an open set $\mathcal U^0\Subset\Omega$ such that $\Omega\subset \cup_{j=0}^\ell \mathcal U^j$. Finally, we consider a decomposition of unity $(\xi_j)_{j=0}^\ell$ with respect to the covering
$\mathcal U^0,\dots,\mathcal U^\ell$ of $\Omega$. 
For $j\in\{1,\dots,\ell\}$ we consider the extension $\bfPhi_j$ of $\varphi_j$ given by \eqref{eq:Phi} with inverse $\bfPsi_j$.
 
We transform now the system \eqref{eq:Stokesdiv} to a corresponding problem on the half space. This is reminiscent of \cite[Section 3]{Br2} but more care is required for the boundary conditions.
 Let us fix $j\in\{1,\dots,\ell\}$ and assume, without loss of generality, that the reference point $y_j=0$ and that the outer normal at~$0$ is pointing in the negative $x_n$-direction (this saves us some notation regarding the translation and rotation of the coordinate system).
We multiply $\bfu$ by $\xi_j$ and obtain for $\bfu_j:=\xi_j\bfu$, $\Pi_j:=\xi_j\pi$ and $\bfF_j:=\xi_j\bfF$ the equation
\begin{align}\label{eq:Stokes2}
\Delta \bfu_j-\nabla\Pi_j=[\Delta,\xi_j]\bfu-[\nabla,\xi_j]\Pi+[\Div,\xi_j]\bfF-\Div\bfF_j,\quad\Div\bfu_j=\nabla\xi_j\cdot\bfu,\quad\text{in}\quad\Omega,\\
 \label{eq:Stokes2b}
\bfu_j\cdot\bfn=0,\quad (\nabla\bfu_j\,\bfn+\bfF_j\,\bfn)_{\bftau}=-\alpha (\bfu_j)_\bftau-(\bfu\otimes\nabla\xi_j\,\bfn)_{\bftau}+\xi_j\GG,\quad\text{on}\quad\partial\Omega,
\end{align}
with the commutators $[\Delta,\xi_j]=\Delta\xi_j+2\nabla\xi_j\cdot\nabla$, $[\nabla,\xi_j]=\nabla\xi_j$ and $[\Div,\xi_j]=\nabla\xi_j\cdot$.
Finally, we set $\bfv_j:=\bfu_j\circ\bfPhi_j$, $\theta_j:=\Pi_j\circ\bfPhi_j$, 
\footnote{We denote by $\Delta^{-1}_{\mathbb H}$ the solution operator to the Laplace equation in $\mathbb H$ with homogeneous boundary conditions on $\partial\mathbb H$.}
\begin{align*}
\bfG_j&:=-\nabla\Delta^{-1}_{\mathbb H}\Big(\mathrm{det}(\nabla\bfPhi_j)(\nabla\xi_j\cdot\bfu)\circ\bfPhi_j([\Delta,\xi_j]\bfu-[\nabla,\xi_j]\Pi+[\Div,\xi_j]\bfF)\circ\bfPhi_j\Big),\\\bfH_j&:=\mathbf{B}_j\bfF_j\circ\bfPhi_j,
\end{align*}
$h_j=\mathrm{det}(\nabla\bfPhi_j)(\nabla\xi_j\cdot\bfu)\circ\bfPhi_j$, $\alpha_j:=\alpha\circ\bfPhi_j$ as well as $\boldsymbol{\mathfrak G}_j:=\xi_j\GG\circ\bfPhi_j$ and 
obtain the equations
\begin{align}\label{eq:Stokes3}
&\Div\big(\bfA_j\nabla\bfv_j)-\Div(\mathbf{B}_j\theta_j)=-\Div(\bfG_j+\bfH_j),
\quad\mathbf{B}_j^\top:\nabla\bfv_j=h_j,\quad\text{in}\quad\mathbb H,\\
&\bfv_j\cdot\nabla\varphi_j^\perp=0,\quad
(\widetilde{\bfA_j\nabla\bfv_j}+\tilde\bfG_j+\tilde \bfH_j)^n=-\big(\bfv_j\otimes\nabla\xi_j\circ\bfPhi_j\,\nabla\varphi_j^\perp)
 \label{eq:Stokes3b}\\&\qquad\qquad\qquad\qquad\qquad\qquad\qquad\qquad\qquad+\big(\big(\bfv_j\otimes\nabla\xi_j\circ\bfPhi_j\,\nabla\varphi_j^\perp\big)\cdot\nabla\varphi_j^\perp\big)\nabla\varphi_j^\perp\big)\nonumber\\
&\qquad\qquad\qquad\qquad\qquad\qquad\qquad\qquad\qquad-\alpha_j\,\big(\bfv_j-(\bfv_j\cdot \nabla\varphi_j^\perp)\nabla\varphi_j^\perp\big)+\boldsymbol{\mathfrak G}_j,\quad\text{on}\quad\partial\mathbb H,\nonumber
\end{align}
where $\bfA_j:=\mathrm{det}(\nabla\bfPhi_j)\nabla\bfPsi_j^\top\circ\bfPhi_j\nabla\bfPsi_j\circ\bfPhi_j$ and $\mathbf{B}_j:=\mathrm{det}(\nabla\bfPhi_j)\nabla\bfPsi_j\circ\bfPhi_j$
 (note that we have $\Div\mathbf{B}_j=0$ due to the Piola identity).
 This can be rewritten as (note that $((\mathbf{B}_j-\mathbb I_{n\times n})\bfe_ n)^i=0$ for $i=1,\dots,n-1$)
\begin{align}\label{eq:Stokes4}
\Delta\bfv_j-\nabla\theta_j&=\Div\big((\mathbb I_{n\times n}-\bfA_j)\nabla\bfv_j)+\Div((\mathbf{B}_j-\mathbb I_{n\times n})\theta_j)-\Div(\bfG_j+\bfH_j)\quad\text{in}\quad\mathbb H,\\
&\Div\bfv_j=(\mathbb I_{n\times n}-\mathbf{B}_j)^\top:\nabla\bfv_j+h_j\quad\text{in}\quad\mathbb H,\label{eq:Stokes4a}\\
 \label{eq:Stokes4b}
\bfv_j\cdot\bfe_n&=\mathfrak g(\bfv_j),\quad \partial_n\tilde\bfv_j+(\widetilde{(\bfA_j-\mathbb I_{n\times n})\nabla\bfv_j}+\tilde\bfG_j+\tilde \bfH_j)=\boldsymbol{\mathfrak G}(\bfv_j),\quad\text{on}\quad\partial\mathbb H,
\end{align}
where 
\begin{align*}
\mathfrak g(\bfv_j)&=\bfv_j\cdot\bfe_n-\bfv_j\cdot\nabla\varphi_j^\perp,\\
\boldsymbol{\mathfrak G}(\bfv_j)&=-\big(\bfv_j\otimes\nabla\xi_j\circ\bfPhi_j\,\nabla\varphi_j^\perp
-\big(\big(\bfv_j\otimes\nabla\xi_j\circ\bfPhi_j\,\nabla\varphi_j^\perp\big)\cdot\nabla\varphi_j^\perp\big)\nabla\varphi_j^\perp\big)\\
&-\alpha_j\,\big(\bfv_j-(\bfv_j\cdot \nabla\varphi_j^\perp)\nabla\varphi_j^\perp\big)+\GG_j\\
&=:\boldsymbol{\mathfrak G}^1(\bfv_j)+\boldsymbol{\mathfrak G}^2(\bfv_j)+
\boldsymbol{\mathfrak G}_j.
\end{align*}
Setting 
\begin{align*}
\bfS (\bfv,\theta)&=-\bfS _1(\bfv)-\bfS_2 (\theta),\\
\bfS_1 (\bfv)&=(\mathbb I_{n\times n}-\bfA_j)\nabla\bfv,\\
\bfS_2 (\theta)&=(\mathbf{B}_j-\mathbb I_{n\times n})\theta,\\
\mathfrak s(\bfv)&=(\mathbb I_{n\times n}-\mathbf{B}_j)^\top:\nabla\bfv,
\end{align*} we can finally write
\eqref{eq:Stokes3} as
\begin{align}\label{eq:Stokes5}
\Delta\bfv_j-\nabla\theta_j&=-\Div\big(\bfS(\bfv_j,\theta_j)+\bfG_j+\bfF_j\big),
\quad\Div\bfv_j=\mathfrak s(\bfv_j)+h_j\quad\text{in}\quad\mathbb H,\\
 \label{eq:Stokes5b}
\bfv_j\cdot\bfe_n&=\mathfrak g(\bfv_j),\quad \partial_n\tilde\bfv_j+\tilde \bfS^n(\bfv_j,\theta_j)+\tilde\bfG_j^n+\tilde\bfF_j^n=\boldsymbol{\mathfrak G}(\bfv_j),\quad\text{on}\quad\partial\mathbb H.
\end{align}
We can now apply the estimates for the half space from Theorem \ref{thm:StokesH} obtaining
\begin{align}\label{eq:0201cF}
\begin{aligned}
\|\bfv_j\|_{W^{s,p}(\mathbb H)}+\|\theta_j\|_{W^{s-1,p}(\mathbb H)}&\lesssim \|\bfS (\bfv_j,\theta_j)+\bfH_j+\bfG_j\|_{W^{s-1,p}(\mathbb H)}+\|\mathfrak s(\bfv_j)+h_j\|_{W^{s-1,p}(\mathbb H)}\\
&+\|\mathfrak g(\bfv_j)\|_{W^{s-1/p,p}(\partial\mathbb H)}+\|\boldsymbol{\mathfrak G}(\bfv_j)\|_{W^{s-1-1/p,p}(\partial\mathbb H)}.
\end{aligned}
\end{align}
Our remaining task consists in estimating the right-hand side. As in \cite[Section 3]{Br} we have
\begin{align*}
\|\bfS(\bfv_j,\theta_j)\|_{W^{s-1,p}(\mathbb H)}+\|\mathfrak s(\bfv_j)\|_{W^{s-1,p}(\partial\mathbb H)}&\lesssim \|\bfPhi_j\|_{\mathscr M^{s,p}(\mathbb H)}\Big(\|\bfv\|_{W^{s,p}(\mathbb H)}+\|\theta\|_{W^{s-1,p}(\mathbb H)}\Big)
\end{align*}
Clearly, it holds\footnote{Note the inclusions between Sobolev multiplier spaces from \cite[Corollary 4.3.2]{MaSh}.}
\begin{align*}
 \|\bfPhi_j\|_{\mathscr M^{s,p}(\mathbb H)}\lesssim  \|\bfPhi_j\|_{\mathscr M^{s+1,p}(\mathbb H)}\lesssim \|\varphi_j\|_{\mathscr M^{s+1-1/p,p}(\partial\mathbb H)}
\end{align*}
using \eqref{eq:MS}.

We proceed by 

\begin{align*}
\|\bfH_j\|_{W^{s-1,p}(\mathbb H)}&\lesssim \|\mathbf{B}_j\|_{\mathscr M^{s-1,p}_{\tt or}(\mathbb H)}\|\bfF_j\circ\bfPhi_j\|_{W^{s-1,p}(\mathbb H)}\\
&\lesssim \|\nabla\bfPsi_j^\top\circ\bfPhi_j\|_{\mathscr M^{s-1,p}_{\tt or}}
\|\bfF_j\|_{W^{s-1,p}(\mathbb H)}\lesssim \|\bfF\|_{W^{s-1,p}(\Omega)},
\end{align*}
where the hidden constant depends on $\mathrm{det}(\nabla\bfPhi_j)$ and  $\|\bfPhi_j\|_{\mathscr M^{s,p}(\mathbb H)}$ being controlled by \eqref{eq:detJ}.
The continuity of $\nabla\Delta_{\mathbb H}^{-1}$ yields
 \begin{align*}
 \|\bfG_j\|_{W^{s-1,p}(\mathbb H)}
&\lesssim\|\bfu\|_{W^{s-1,p}(\Omega)}+\|\pi\|_{W^{s-2,p}(\Omega)}+ \|\bfF\|_{W^{s,p}(\Omega)},
 \end{align*}
cf.~ \eqref{lem:9.4.1}. 
%
From now on we must suppose that $W^{s,p}(\Omega)\hookrightarrow W^{1,2}(\Omega)$. Choosing $s_0\in\R$ such that $W^{1,2}(\Omega)\hookrightarrow W^{s_0,p}(\Omega)$ with $s_0<s$, there is $\gamma\in(0,1)$ such that
 \begin{align}\nonumber
 \|\bfu\|_{W^{s-1,p} (\Omega)}&\leq \|\bfu\|_{W^{s,p} }^{\gamma}\|\bfu\|_{W^{s_0,p} (\Omega)}^{1-\gamma}\lesssim \| \bfu\|_{W^{s,p}(\Omega) }^{\alpha}\| \bfu\|_{W^{1,2}(\Omega) }^{1-\gamma} \\&\lesssim\| \bfu\|_{W^{s,p}(\Omega) }^{\gamma}\big(\|\bfF\|_{L^{2} }+\|\GG\|_{W^{-1/2,2}(\partial\Omega)}\big)^{1-\alpha}\nonumber\\&\lesssim\|\bfu\|_{W^{s,p} }^{\gamma}\big(\|\bfF\|_{W^{s-1,p} (\Omega)}+\|\GG\|_{W^{s-1-1/p,p} (\partial\Omega)}\big)^{1-\gamma}\nonumber\\
&\leq\delta \|\bfu\|_{W^{s,p} (\Omega)}+c(\delta)\big(\|\bfF\|_{W^{s-1,p}(\Omega) }+\|\GG\|_{W^{s-1-1/p,p} (\partial\Omega)}\big)\label{eq:asin}
 \end{align}
for $\delta>0$ arbitrary. Note that we used the energy estimate for a weak solution in $W^{1,2} $ in the above. Similarly, we obtain
 \begin{align}
 \|\pi\|_{W^{s-2,p} (\Omega)}
&\leq\delta \|\pi\|_{W^{s-1,p}(\Omega) }+c(\delta)\big(\|\bfF\|_{W^{s-1,p}(\Omega) }+\|\GG\|_{W^{s-1-1/p,p} (\partial\Omega)}\big)\label{eq:asinpi}
 \end{align}

Thus the main task consists in estimating the last line in \eqref{eq:0201cF}.\footnote{This actually is the only place, where more regularity on the boundary is required compared to the case of no-slip boundary conditions considered in \cite[Section 3.2]{Br}.}
We have by \eqref{J}, \eqref{eq:MS} and \eqref{eq:SoMo'}
\begin{align*}
\|\mathfrak g(\bfv_j)\|_{W^{s-1/p,p}(\partial\mathbb H)}&\leq\,\|\bfe_n-\nabla\varphi_j^\perp\|_{\mathscr M^{s-1/p,p}_{\tt or}(\partial\mathbb H)}\|\bfv_j\|_{W^{s-1/p,p}(\partial\mathbb H)}\\
&\lesssim\|\nabla\varphi_j\|_{\mathscr M_{\tt or}^{s-1/p,p}(\partial\mathbb H)}\|\bfv_j\|_{W^{s,p}(\mathbb H)}\\
&\lesssim\|\varphi_j\|_{\mathscr M^{s+1-1/p,p}(\partial\mathbb H)}\|\bfv_j\|_{W^{s,p}(\mathbb H)}.
\end{align*}
Similarly, it holds 
\begin{align*}
\|\boldsymbol{\mathfrak G}^2(\bfv_j)\|_{W^{s-1-1/p,p}(\partial\mathbb H)}
\leq\|\bfv_j-(\bfv_j\cdot \nabla\varphi_j^\perp)\nabla\varphi_j^\perp\|_{W^{s-1/p,p}(\partial\mathbb H)}
&\lesssim\|\varphi_j\|_{\mathscr M^{s+1-1/p,p}(\partial\mathbb H)}\|\bfv_j\|_{W^{s,p}(\mathbb H)}
\end{align*}
using that $\alpha\in \mathscr M^{s-1-1/p,p}_{s-1/p,p}$ (which implies the same for $\alpha_j$ by \eqref{lem:9.4.1}).
Furthermore, we obtain
\begin{align*}
&\|\boldsymbol{\mathfrak G}^1(\bfv_j)\|_{W^{s-1-1/p,p}(\partial\mathbb H)}
\\&\qquad\leq
\Big(\|\bfv_j\otimes\nabla\xi_j\circ\bfPhi_j\|_{W^{s-1-1/p,p}(\partial\mathbb H)}+\|\bfv_j\otimes\nabla\xi_j\circ\bfPhi_j
\nabla\varphi_j^\perp\cdot\nabla\varphi_j^\perp\|_{W^{s-1-1/p,p}(\partial\mathbb H)}\Big)\\
&\qquad\lesssim \big(1+\|\nabla\varphi_j\|_{\mathscr M^{s-1-1/p,p}_{\tt or}(\partial\mathbb H)}^2\big)\|\bfv_j\|_{W^{s-1-1/p,p}(\partial\mathbb H)}\\
&\qquad\lesssim\|\bfu\|_{W^{s-1-1/p,p}(\mathbb H)}\lesssim\|\bfu\|_{W^{s-1,p} }
\end{align*}
 using \eqref{lem:9.4.1} in the penultimate and $\Div\bfu=0$ in the ultimate step.
The last term can be estimated as in \eqref{eq:asin} (which requires $p\geq2$).
Hence we obtain
  \begin{align}\label{almost1}
\|\boldsymbol{\mathfrak G}^1(\bfv_j)\|_{W^{s-1-1/p,p}(\partial\mathbb H)}
&\leq\kappa\|\bfu\|_{W^{s,p} }+c(\kappa)\|\bfF\|_{W^{s-1,p} }
 \end{align}
for any $\kappa>0$.
Moreover, we have
\begin{align*}
\|\boldsymbol{\mathfrak G}_j\|_{W^{s-1-1/p,p}(\partial\mathbb H)}\lesssim 
\|\boldsymbol{\mathfrak G}\|_{W^{s-1-1/p,p}(\partial\Omega)}
\end{align*}
by \eqref{lem:9.4.1}.

 If the assumptions from part (a) are in force we clearly have$^4$
$$\|\varphi_j\|_{\mathscr M^{s+1-1/p,p}_{\tt or}(\partial\mathbb H)}\lesssim\|\varphi_j\|_{\mathscr M^{s+1-1/p,p}_{\tt or}(\partial\mathbb H)}\leq\,c\delta,$$
while we can estimate
\begin{align*}
\|\varphi_j\|_{\mathscr M^{s+1-1/p,p}(\partial\mathbb H)}=\|\nabla\varphi_j\|_{\mathscr M^{s-1/p,p}_{\tt or}(\partial\mathbb H)}\lesssim r^{s-n/p}\|\nabla\varphi_j\|_{W^{s-1/p,p}(\partial\mathbb H)}
\end{align*}
under the conditions of (b) using \eqref{eq:MSc}. Here we assume that $\varphi_j$ is supported in a ball of radius $r$ and choose $r$ sufficiently small to come to the same conclusion as for (a).
Putting everything together shows for all $j\in\{1,\dots,\ell\}$
 \begin{align*}
\begin{aligned}
\|\nabla\bfv_j\|_{W^{s-1,p}(\mathbb H)}+\|\theta_j\|_{W^{s-1,p}(\mathbb H)}&\lesssim \delta
 \|\bfu\|_{W^{s,p}(\Omega)}+ \delta \|\pi\|_{W^{s-1,p}(\Omega)}\\&+\|\bfF\|_{W^{s-1,p}(\Omega)}+\|\boldsymbol{\mathfrak G}\|_{W^{s-1-1/p,p}(\partial\Omega)}.
\end{aligned}
\end{align*}
Clearly, the same estimate (even without the first two terms on the right-hand side) holds for $j=0$ by local regularity theory for the Stokes system.
Summing over $j=0,1,\dots,\ell$ and choosing
 $\delta$ small enough proves the claimed estimate
provided $\bfu$ and $\pi$ are sufficiently smooth.
 Let us finally remove this assumption which is not a priori given.
Applying a standard regularisation procedure (by convolution with mollifying kernel) to the functions $\varphi_1,\dots,\varphi_\ell$ from \ref{A1}--\ref{A3} in the parametrisation of $\partial{\Omega}$ we obtain a smooth boundary. Classically, the solution to the corresponding Stokes system is smooth.
 Such a procedure is standard and has been applied, for instance, in \cite[Section 4]{CiMa}. It is possible to do this in a way that the original domain is included in the regularised domain to which we extend the function $\bff$ by means of an extension operator.
 The regularisation applied to the $\varphi_j's$ converges on all Besov spaces with $p<\infty$. It does not converge on $W^{1,\infty}(\R^{n-1})$, but the regularisation does not expand the $W^{1,\infty}(\R^{n-1})$-norm, which is sufficient. Following the arguments above we obtain
 \eqref{eq:main} for the regularised problem with a uniform constant. The limit passage is straightforward since \eqref{eq:Stokes} is linear.
\end{proof}
We now consider the case where the embedding $W^{s,p}(\Omega)\hookrightarrow W^{1,2}(\Omega)$  fails.
\begin{corollary}\label{cor:stokessteadyF}
Let $n=2,3$, $p\in(1,2)$ and $s\geq1$. Assume that
$\alpha:\partial\Omega\rightarrow[0,\infty)$ is a measurable function belonging to the class $\alpha\in \mathscr M_{s-1/p,p}^{s-1-1/p,p}\cap \mathscr M_{s-1/p',p'}^{s-1-1/p',p'}$.
Let $\Omega$ be a bounded Lipschitz domain and suppose
that either
\begin{enumerate}
\item[(a)]  $p(s-1)\leq n$ and $\partial\Omega$ belongs to the class
$\mathscr M^{s+1-1/p,p}\cap \mathscr M^{s+1-1/p',p'}(\delta)$ for some sufficiently small $\delta>0$ or
\item[(b)] $p(s-1)> n$ and $\partial\Omega$ belongs to the class
$W^{s+1-1/p',p'}$. 
\end{enumerate}
Then the estimate from Theorem \ref{thm:stokessteadyF} holds.
\end{corollary}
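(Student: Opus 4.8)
The plan is to reduce Corollary \ref{cor:stokessteadyF} to Theorem \ref{thm:stokessteadyF} by supplying the only ingredient that was missing in the proof above, namely a replacement for the a priori energy bound in $W^{1,2}(\Omega)$. Recall that in the proof of Theorem \ref{thm:stokessteadyF} the embedding $W^{s,p}(\Omega)\hookrightarrow W^{1,2}(\Omega)$ was used \emph{only} in the interpolation estimates \eqref{eq:asin}, \eqref{eq:asinpi} and \eqref{almost1}: there one picks $s_0<s$ with $W^{1,2}(\Omega)\hookrightarrow W^{s_0,p}(\Omega)$, interpolates the lower-order norms $\|\bfu\|_{W^{s-1,p}}$, $\|\pi\|_{W^{s-2,p}}$, $\|\bfu\|_{W^{s-1-1/p,p}}$ between $W^{s,p}$ and $W^{s_0,p}$, and then absorbs the $W^{1,2}$-part via the Hilbert-space energy estimate. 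For $p\in(1,2)$ this embedding fails (in dimension $n=2,3$ one has $W^{1,2}\not\hookrightarrow L^p$ only when $p$ is very large, but the genuinely problematic regime is $p<2$ where $W^{s,p}$ need not embed into any $W^{1,2}$-type space at the bottom of the scale). The fix is to run the entire argument on the \emph{dual} side: solve the Stokes problem with perfect-slip/Navier boundary conditions in the energy space associated with the conjugate exponent $p'>2$, for which $W^{1,2}(\Omega)\hookrightarrow W^{s_0',p'}(\Omega)$ does hold, obtain the a priori $W^{1,p'}$-estimate there, and transfer back to $p$ by duality.

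Concretely, I would proceed as follows. First, record the dual formulation of \eqref{eq:Stokesdiv} (this is exactly the content of Appendix \ref{sec:AA}, which I may assume): the solution operator $\bfF\mapsto(\bfu,\pi)$ for \eqref{eq:Stokesdiv} with Navier boundary data is, up to the natural pairings, the adjoint of the solution operator for the same type of problem with friction coefficient $\alpha$ again appearing in the boundary condition — the Navier condition $(\nabla\bfu\,\bfn)_\bftau+\alpha\bfu_\bftau=\GG$ is formally self-adjoint in the $L^2$-pairing, which is why $\alpha$ reappears with the same structure in the adjoint problem and why the Corollary must assume the multiplier hypothesis for \emph{both} $(s-1/p,p)$ and $(s-1/p',p')$, and symmetrically assume the boundary chart regularity on the $p'$-scale. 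Second, since $p'>2\geq p$ and $n=2,3$, one checks $W^{s,p'}(\Omega)\hookrightarrow W^{1,2}(\Omega)$ is \emph{not} what is needed either — rather $W^{1,2}(\Omega)\hookrightarrow W^{s_0',p'}(\Omega)$ for a suitable $s_0'<s$, which does hold because $p'>2$ makes the target integrability easier. Hence Theorem \ref{thm:stokessteadyF} (its proof, verbatim, with $p$ replaced by $p'$ and using the stated $p'$-hypotheses on $\alpha$ and on $\partial\Omega$) applies and yields the maximal regularity estimate \eqref{eq:maindiv} on the $W^{s,p'}$-scale. Third, dualize: by Appendix \ref{sec:AA} the $W^{s,p'}$ bound for the adjoint problem is equivalent to the $W^{s,p}$ bound \eqref{eq:maindiv} for the original problem, and existence/uniqueness for $p<2$ follows from the dual existence/uniqueness exactly as indicated in the proof of Theorem \ref{thm:stokessteadyF} (``the case $p<2$ can eventually be treated by duality'').

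The remaining bookkeeping is to verify that the hypotheses of the Corollary are precisely what the dual argument consumes. The assumption $\alpha\in\mathscr M^{s-1-1/p,p}_{s-1/p,p}\cap\mathscr M^{s-1-1/p',p'}_{s-1/p',p'}$ gives the multiplier bound for $\alpha_j=\alpha\circ\bfPhi_j$ needed to estimate $\boldsymbol{\mathfrak G}^2(\bfv_j)$ in \emph{both} the primal and the adjoint run of the argument; the boundary hypotheses (a)/(b) on the $p'$-scale (combined, in case (a), with the $p$-scale condition so that both localizations are available) give the smallness $\|\varphi_j\|_{\mathscr M^{s+1-1/p',p'}(\partial\mathbb H)}\le c\delta$ via \eqref{eq:MSa}, respectively the $W^{s+1-1/p',p'}$-control plus the rescaling \eqref{eq:MSc} via small support radius. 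With these in hand the proof of Theorem \ref{thm:stokessteadyF} goes through on the $p'$-scale without the $W^{1,2}$-embedding causing trouble, and the final estimate on the $p$-scale is obtained by transposition.

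The main obstacle, and the step I would be most careful about, is making the duality transfer rigorous — that is, checking that the adjoint of the Navier–Stokes-type boundary value problem \eqref{eq:Stokesdiv} is again a problem of the \emph{same} form (same friction term, with $\GG$-data now paired against traces) so that Theorem \ref{thm:stokessteadyF}'s proof can be re-applied to it, and that the function-space pairings (including the trace spaces $W^{s-1-1/p,p}(\partial\Omega)$ vs.\ their $p'$-duals, and the mean-value constraint linking $h$ and $\mathfrak g$) match up exactly. Since the paper defers this to Appendix \ref{sec:AA}, in the body of the proof I would simply invoke that appendix for the equivalence of the primal $W^{s,p}$ estimate with the dual $W^{s,p'}$ estimate, note that the latter is covered by Theorem \ref{thm:stokessteadyF} under the stated $p'$-hypotheses because now the relevant embedding $W^{1,2}(\Omega)\hookrightarrow W^{s_0',p'}(\Omega)$ is available, and conclude.
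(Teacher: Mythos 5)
There is a genuine gap at the heart of your argument: the claimed transfer step ``the $W^{s,p'}$ bound for the adjoint problem is equivalent, by Appendix \ref{sec:AA}, to the $W^{s,p}$ bound \eqref{eq:maindiv} for the original problem'' is not true for $s>1$. Duality reverses the smoothness scale: testing $\bfu$ (or $\nabla\bfu$) against data of the dual problem and integrating by parts converts a $W^{\sigma,p'}$-estimate for the dual solution into an estimate of the primal solution in regularity $2-\sigma$ at exponent $p$. This is exactly how the paper uses it — $W^{1,p'}$ dualizes to $W^{1,p}$ (Appendix \ref{sec:AA}.1), $W^{2,p'}$ dualizes to $L^p$, and $W^{2-\sigma,p'}$ dualizes to $W^{\sigma,p}$ with $\sigma<1$. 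To recover $\|\bfu\|_{W^{s,p}}$ with $s>1$ by transposition you would need a \emph{low}-regularity (very weak) estimate for the dual problem at exponent $p'$, not the maximal regularity estimate of Theorem \ref{thm:stokessteadyF} on the $p'$-scale; so running the whole theorem at $p'$ and ``dualizing at level $s$'' does not close. A telltale sign is the statement of the corollary itself: it retains the $p$-scale hypotheses on $\alpha$ and (in case (a)) on $\partial\Omega$, which would be superfluous if a pure $p'$-side argument plus duality sufficed.

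The actual proof uses duality only at the energy level and then works at the original exponent: for $s=1$, $p\in(1,2)$ the $W^{1,p}$-estimate follows from the $W^{1,p'}$-estimate of Theorem \ref{thm:stokessteadyF} (available since $p'\geq 2$) by the gradient-pairing duality of Appendix \ref{sec:AA}; with this $W^{1,p}$-estimate in hand one repeats the localization proof of Theorem \ref{thm:stokessteadyF} at exponent $p$ verbatim, and the only modification is in the lower-order absorption step \eqref{eq:asin}: instead of interpolating down to the $W^{1,2}$ energy estimate (which is unavailable when $W^{s,p}\not\hookrightarrow W^{1,2}$), one interpolates down to $\|\nabla\bfu\|_{L^p}+\|\pi\|_{L^p}$ and controls this by $\|\bfF\|_{L^p}+\|\boldsymbol{\mathfrak G}\|_{W^{-1/p,p}(\partial\Omega)}$ using the newly obtained $W^{1,p}$-estimate, then absorbs as before. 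You correctly identified that the $W^{1,2}$-embedding enters only through \eqref{eq:asin}-type terms, but the repair is to replace the energy estimate there by the dualized $s=1$ estimate while keeping the rest of the proof on the $p$-scale, not to move the whole argument to $p'$ and transpose at level $s$.
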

\begin{proof}
Let us first consider the case $s=1$ and $p\in(1,2)$. In this case we can apply a straight forward duality argument (see Appendix~\ref{sec:AA}) to obtain the desired estimate. With the $W^{1,p}$-estimate at hand we repeat the proof from Theorem \ref{thm:stokessteadyF}. They only term which must be treated differently is the lower order term corresponding to \eqref{eq:asin} above. We have now
\begin{align*}
\|\bfu\|_{W^{s-1,p}(\Omega)}&+\|\pi\|_{W^{s-2,p}(\Omega)}\\&\leq\,\delta\big( \|\bfu\|_{W^{s,p}(\Omega)}+\|\pi\|_{W^{s-1,p}(\Omega)}\big)+c(\delta)\big( \|\nabla\bfu\|_{L^{p}(\Omega)}+\|\pi\|_{L^{p}(\Omega)}\big)\\
&\leq\,\delta\big( \|\bfu\|_{W^{s,p}(\Omega)}+\|\pi\|_{W^{s-1,p}(\Omega)}\big)+c(\delta) \big(\|\bfF\|_{L^{p}(\Omega)}+\|\boldsymbol{\mathfrak G}\|_{W^{-1/p,p}(\partial\Omega)}\big)\\
&\leq\,\delta\big( \|\bfu\|_{W^{s,p}(\Omega)}+\|\pi\|_{W^{s-1,p}(\Omega)}\big)+c(\delta) \big(\|\bfF\|_{W^{s-1,p}(\Omega)}+\|\boldsymbol{\mathfrak G}\|_{W^{s-1-1/p,p}(\partial\Omega)}\big)
\end{align*}
and the proof can be completed as before.
\end{proof}

We are now able to deduce an estimate in spaces $W^{\sigma,p} $ with $\sigma\in(-\infty,1)$ by a duality argument, see Appendix \ref{sec:AA} for details. Considering first the case of homogeneous boundary data 
 we obtain
$$\|\nabla\bfu\|_{W^{\sigma-1,p}_n(\Omega)}
\lesssim\|\bff\|_{(W^{2-\sigma,p'}_n(\Omega))'}$$
for a solution $\bfu$ to \eqref{eq:Stokesdiv}, where
$\bff(\bfphi):=-\langle\bfF,\nabla\bfphi\rangle$ for smooth $\bfphi$, with $\bfphi\cdot n=0$.
As in the preamble of Subsection~\ref{sec:half} this is without loss of generality as solving the corresponding Neumann problems (see Corollary \ref{cor:laplace}) one can also consider non-trivial boundary conditions (see Appendix~\ref{sec:AB}).
For that propose we need the trace space
\begin{align*}
\mathscr W^{s,p}(\partial\Omega)&:=\{\nabla v\cdot\bfn|_{\partial\Omega}:\,v\,\in W^{s,p}_n(\Omega)\},\\
\|{\chi}\|_{\mathscr W^{s-1-1/p,p}(\partial\Omega)}&:=\inf\{\|v\|_{W^{s,p}(\Omega)}:\,\,,\,v\in W^{s,p}_n(\Omega),\,\,\nabla v\cdot\bfn={\chi}\},
\end{align*}
where we understand the trace $\nabla v\cdot\bfn|_{\partial\Omega}$ and the equality  in the sense of distributions.
 and conclude with the following corollary.
\begin{corollary}\label{cor:stokessteadyF}
Let $n=2,3$, $p\in(1,\infty)$ and $\sigma<1$. Set $s:=2-\sigma$ and assume that
$\alpha:\partial\Omega\rightarrow[0,\infty)$ is a measurable function belonging to the class $\alpha\in \mathscr M_{s-1/p,p}^{s-1-1/p,p}$.
Let $\Omega$ be a bounded Lipschitz domain and suppose
that either
\begin{enumerate}
\item[(a)]  $p(s-1)\leq n$ and $\partial\Omega$ belongs to the class
$\mathscr M^{s+1-1/p,p}(\delta)$ for some sufficiently small $\delta>0$ or
\item[(b)] $p(s-1)> n$ and $\partial\Omega$ belongs to the class
$W^{s+1-1/p,p}$. 
\end{enumerate}
For any $\bff\in (W^{2-\sigma,p'}_n({\Omega}))'$ , $h\in W^{\sigma-1,p}(\Omega)$, $\mathfrak g\in \mathscr W^{\sigma-1/p,p}(\partial\Omega)$ with $\int_{\Omega}h\dx=\int_{\partial\Omega}\mathfrak g\,\dd\mathcal H^{n-1}$ and $\GG\in \mathscr W^{\sigma-1-1/p,p}(\partial\Omega)$
there is a unique solution to \eqref{eq:Stokesdiv} and we have
\begin{align*}
\begin{aligned}
\|\nabla\bfu\|_{W^{\sigma-1,p}_n(\Omega)}
&\lesssim\|\bff\|_{(W^{2-\sigma,p'}(\Omega))'}+\|h\|_{W^{\sigma-1,p}_n(\Omega)}\\&+\|\mathfrak g\|_{\mathscr W^{\sigma-1/p,p}(\partial\Omega)}+\|\GG\|_{\mathscr W^{\sigma-1-1/p,p}(\Omega)}.
\end{aligned}
\end{align*}
\end{corollary}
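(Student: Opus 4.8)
The plan is to derive this low-regularity result from Theorem~\ref{thm:stokessteadyF} (and Corollary~\ref{cor:stokessteadyF}) by a duality argument, exactly in the spirit of the two-line computation that appears in the excerpt just before the statement. First I would reduce to the case of homogeneous boundary data $h=\mathfrak g=0$ and $\GG=0$. This is done as in the preamble to Subsection~\ref{sec:half}: solving the Neumann problem for the Laplacian (Corollary~\ref{cor:laplace}) with data $h$, $\mathfrak g$ removes the divergence constraint and the normal boundary value at the expense of a correction $\nabla\vartheta$ which is controlled by $\|h\|_{W^{\sigma-1,p}}+\|\mathfrak g\|_{\mathscr W^{\sigma-1/p,p}}$; similarly an extension argument as in Appendix~\ref{sec:AB} absorbs $\GG$ into the tensor $\bfF$ with the $\mathscr W^{\sigma-1-1/p,p}$-norm appearing on the right. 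After these reductions it suffices to prove $\|\nabla\bfu\|_{W^{\sigma-1,p}_n(\Omega)}\lesssim\|\bff\|_{(W^{2-\sigma,p'}_n(\Omega))'}$ for the solution of \eqref{eq:Stokesdiv} with $\bff(\bfphi)=-\langle\bfF,\nabla\bfphi\rangle$ and all other data zero.

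Next I would set up the duality pairing. Since $\sigma<1$, we have $s=2-\sigma>1$, and moreover $s'$-type conditions are met: the hypotheses (a)/(b) on $\partial\Omega$ are precisely those needed to apply Theorem~\ref{thm:stokessteadyF} at differentiability $s=2-\sigma$ and integrability $p'$ — note $\mathscr M^{s+1-1/p,p}$ and $\mathscr M^{s+1-1/p',p'}$ essentially coincide under the stated smallness, and the condition $\alpha\in\mathscr M^{s-1-1/p,p}_{s-1/p,p}$ transfers to the conjugate exponents by the self-improving nature of the multiplier classes together with the smallness reductions described in the remarks. Concretely, given a test tensor (or forcing) on the dual side, I would let $(\bfw,\varpi)\in W^{s,p'}_{n,\Div}(\Omega)\times W^{s-1,p'}(\Omega)$ solve the adjoint Stokes problem \eqref{eq:Stokesdiv} with perfect-slip/Navier data and right-hand side built from $\nabla^{\sigma-1}$-type test functions; by Theorem~\ref{thm:stokessteadyF} applied with parameters $(s,p')$ this solution satisfies the maximal regularity bound $\|\bfw\|_{W^{s,p'}}+\|\varpi\|_{W^{s-1,p'}}\lesssim(\text{dual data})$. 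Testing the weak formulation of the original equation \eqref{eq:Stokesdiv} for $\bfu$ against $\bfw$, and the adjoint equation against $\bfu$, the boundary terms cancel because both fields satisfy $\cdot\,\bfn=0$ and the Navier condition with the same friction coefficient $\alpha\geq0$ (the bilinear form $\int_\Omega\nabla\bfu:\nabla\bfw+\int_{\partial\Omega}\alpha\,\bfu_\bftau\cdot\bfw_\bftau$ is symmetric), leaving $\langle\bff,\bfw\rangle=\langle\text{dual data},\bfu\rangle$. Taking the supremum over normalized dual data then yields the asserted estimate on $\|\nabla\bfu\|_{W^{\sigma-1,p}_n(\Omega)}$, and uniqueness for $\bfu$ follows from the existence/uniqueness for the adjoint problem.

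The main obstacle I anticipate is bookkeeping the trace spaces correctly: one must check that the natural dual of $W^{2-\sigma,p'}_n(\Omega)$ (a space of divergence-free-free vector fields with vanishing normal trace) pairs correctly with $W^{\sigma-1,p}_n(\Omega)$, and that the boundary trace space $\mathscr W^{s,p}(\partial\Omega)$ introduced just above the statement is precisely the right space in which $\mathfrak g$ and $\GG$ must live for the Neumann corrections of the first paragraph to be bounded — this is where the somewhat unusual definition of $\mathscr W^{s,p}(\partial\Omega)$ via $\{\nabla v\cdot\bfn:\,v\in W^{s,p}_n(\Omega)\}$ earns its keep, and one needs the mapping properties of the Neumann Laplacian from Appendix~\ref{sec:AB} phrased exactly in these spaces. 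A secondary technical point is justifying the integration-by-parts identity (the cancellation of boundary terms) at the low regularity $\sigma-1<0$; this is handled by first proving the identity for smooth data and smooth domain and then passing to the limit using the linearity of \eqref{eq:Stokesdiv}, exactly as in the approximation step at the end of the proof of Theorem~\ref{thm:stokessteadyF}. Once these identifications are in place the argument is a routine transposition, so I would keep the exposition short and refer to Appendix~\ref{sec:AA} for the details of the dual formulation.
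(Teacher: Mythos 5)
Your proposal follows essentially the same route as the paper: reduce to homogeneous data $h=\mathfrak g=\GG=0$ via the Neumann results of Appendix~\ref{sec:AB}, then transpose, pairing $\nabla\bfu$ against a test field and estimating via the adjoint Navier-slip solution whose $W^{2-\sigma,p'}$-regularity comes from Theorem~\ref{thm:stokessteadyF}, with an approximation of the data by smooth functions to justify the pairing at negative order — this is exactly the scheme of Appendix~\ref{sec:AA} (Subsection on duality from $W^{2-\sigma,p'}$ to $W^{\sigma,p}$) that the paper invokes. The one point you gloss over (asserting that the hypotheses on $\partial\Omega$ and $\alpha$ "transfer to the conjugate exponents") is treated equally loosely in the paper itself, so it does not distinguish your argument from theirs.
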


\section{The problem in non-divergence form}\label{sec:stokessteady}
In this section we consider the steady Stokes system
\begin{align}\label{eq:Stokes}
\begin{aligned}
\Delta \bfu-\nabla\pi=-\bff,\quad\Div\bfu=h,\quad\text{in}\quad\Omega,\\
\bfu\cdot\bfn=\mathfrak g,\quad (\nabla\bfu\,\bfn)_{\bftau}+\alpha \bfu_\bftau=\GG,\quad\text{on}\quad\partial\Omega,
\end{aligned}
\end{align}
in a domain $\Omega\subset\R^n$ with unit normal $\bfn$, where $\bfv_\bftau:=\bfv-(\bfv\cdot\bfn)\bfn$. The result given in the following theorem is a maximal regularity estimate for the solution in terms of the right-hand side and the boundary datum under minimal assumptions on the regularity of $\Omega$ (the corresponding multiplier spaces are introduced in Section \ref{sec:SM}). We start with the case $p\geq \frac{2n}{n+1}$, see Corollary~\ref{cor:stokessteady} for the missing one.
\begin{theorem}\label{thm:stokessteady}
Let $n=2,3$, $p\in(\frac{2n}{n+1},\infty)$ and $s\in[2,\infty)$.
Assume that
$\alpha:\partial\Omega\rightarrow[0,\infty)$ is a measurable function belonging to the class $\alpha\in \mathscr M_{s-1/p,p}^{s-1-1/p,p}$.
Let $\Omega$ be a bounded Lipschitz domain and suppose that either
\begin{enumerate}
\item[(a)]  $p(s-1)\leq n$ and $\partial\Omega$ belongs to the class
$\mathscr M^{s+1-1/p,p}(\delta)$ for some sufficiently small $\delta>0$ or
\item[(b)] $p(s-1)> n$ and $\partial\Omega$ belongs to the class
$W^{s+1-1/p,p}$. 
\end{enumerate}
For any $\bff\in W^{s-2,p}({\Omega})$, $h\in W^{s-1,p}(\Omega)$ with $\int_{\Omega}h\dx=0$, $\mathfrak g\in W^{s-1/p,p}(\partial\Omega)$ and $\GG\in W^{s-1-1/p,p}(\partial\Omega)$
there is a unique solution $(\bfu,\pi)$ to \eqref{eq:Stokes} and we have
\begin{align}\label{eq:main}
\|\bfu\|_{W^{s,p}({\Omega})}+\|\pi\|_{W^{s-1,p}({\Omega})}\lesssim\|\bff\|_{W^{s-2,p}({\Omega})}+\|\mathfrak g\|_{W^{s-1/p,p}(\partial\Omega)}+\|\GG\|_{W^{s-1-1/p,p}(\partial\Omega)}.
\end{align}
The constant in \eqref{eq:main} depends on the
$\mathscr M^{s+1-1/p}$- or $W^{s+1-1/p}$-norms
of the local charts in the parametrisation of  $\partial\Omega$.
\end{theorem}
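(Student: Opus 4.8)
The plan is to reduce the non-divergence problem~\eqref{eq:Stokes} to the divergence-form problem~\eqref{eq:Stokesdiv}, which is settled by Theorem~\ref{thm:stokessteadyF}. First I would dispose of $h$ and $\mathfrak g$ exactly as in the proof of Theorem~\ref{thm:stokessteadyF}: solving the Neumann problem for the Laplacian with data $h$ and $\mathfrak g$ via Theorem~\ref{thm:laplace} and replacing $\bfu$ by $\bfu-\nabla\vartheta$ turns~\eqref{eq:Stokes} into a system of the same form with $h=\mathfrak g=0$, a modified force in $W^{s-2,p}(\Omega)$ and a modified slip datum in $W^{s-1-1/p,p}(\partial\Omega)$ (here it is used that the Neumann solve gains two derivatives, so that $\nabla^2\vartheta$ still has a boundary trace for $s\ge2$), all controlled by the right-hand side of~\eqref{eq:main}. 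Hence from now on $h=\mathfrak g=0$.

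Next I would write $\bff=\Div\bfF$ with $\bfF\in W^{s-1,p}(\Omega)$ and $\|\bfF\|_{W^{s-1,p}(\Omega)}\lesssim\|\bff\|_{W^{s-2,p}(\Omega)}$. Since $s-2\ge0$ one may extend $\bff$ to a compactly supported function in $W^{s-2,p}(\R^n)$, convolve with the Newtonian potential to get $\bfw$ with $\Delta\bfw=\bff$, and set $\bfF:=\nabla\bfw$; Calder\'on--Zygmund theory gives the bound. (Alternatively: split off the mean of $\bff$, absorb it into a linear pressure, and apply the Bogovskii operator, which moreover yields $\bfF\,\bfn=0$ on $\partial\Omega$.) With such an $\bfF$, the solution $(\bfu,\pi)$ of~\eqref{eq:Stokesdiv} with data $\bfF$, $h=0$, $\mathfrak g=0$ and slip datum $\GG':=\GG+(\bfF\,\bfn)_{\bftau}$ solves~\eqref{eq:Stokes}, since $(\nabla\bfu\,\bfn+\bfF\,\bfn)_{\bftau}+\alpha\bfu_{\bftau}=\GG'$ and the tangential term $(\bfF\,\bfn)_{\bftau}$ cancels. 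It remains to check $\GG'\in W^{s-1-1/p,p}(\partial\Omega)$: the trace of $\bfF\in W^{s-1,p}(\Omega)$ lies in $W^{s-1-1/p,p}(\partial\Omega)$ (note $s-1\ge1>1/p$), and $\bfn$, sharing the regularity of $\nabla\varphi_j$, is a Sobolev multiplier on $W^{s-1-1/p,p}(\partial\Omega)$ under the standing hypothesis on $\partial\Omega$ --- the same composition and multiplication calculus already used throughout the proof of Theorem~\ref{thm:stokessteadyF} --- whence $\|\GG'\|_{W^{s-1-1/p,p}(\partial\Omega)}\lesssim\|\GG\|_{W^{s-1-1/p,p}(\partial\Omega)}+\|\bfF\|_{W^{s-1,p}(\Omega)}$.

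It then suffices to invoke Theorem~\ref{thm:stokessteadyF}. The hypotheses on $\alpha$ and on $\partial\Omega$ are verbatim those assumed here, and for $s\ge2$, $p>\tfrac{2n}{n+1}$ one has $n(\tfrac1p-\tfrac12)+1<s$, so $W^{s,p}(\Omega)\hookrightarrow W^{1,2}(\Omega)$ and Theorem~\ref{thm:stokessteadyF} applies directly; this range of $p$ also ensures that traces of $W^{s-1,p}(\Omega)$-functions lie in $L^2(\partial\Omega)$, which is what keeps the extra boundary contribution $(\bfF\,\bfn)_{\bftau}$ admissible in the $W^{1,2}$-energy estimate underlying that proof (cf.~\eqref{eq:asin}); the complementary range $p\le\tfrac{2n}{n+1}$ is treated in Corollary~\ref{cor:stokessteady}. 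Combining the bounds of the previous two steps with~\eqref{eq:maindiv} then gives~\eqref{eq:main}, and the constant inherits the dependence on the Lipschitz constants, on the $\mathscr M^{s+1-1/p,p}$- (resp.~$W^{s+1-1/p,p}$-) norms of the charts, and on $\Omega$ through the auxiliary Neumann and Newtonian-potential estimates. Uniqueness is inherited from Theorem~\ref{thm:stokessteadyF} (for $p<2$ via the duality argument of Appendix~\ref{sec:AA}).

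The one genuinely delicate step is the second: propagating the Navier boundary condition through the passage to divergence form and controlling the newly created boundary term $(\bfF\,\bfn)_{\bftau}$ in the minimal-regularity setting --- this is exactly where the Sobolev-multiplier structure of $\bfn$ and the lower bound on $p$ are needed. The rest is bookkeeping built on Theorem~\ref{thm:stokessteadyF} and the auxiliary Neumann estimates.
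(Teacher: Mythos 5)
Your proposal is correct in substance, but it takes a genuinely different route from the paper. The paper does \emph{not} reduce \eqref{eq:Stokes} to Theorem~\ref{thm:stokessteadyF} as a black box: it re-runs the localization argument for the non-divergence system (same charts $\bfPhi_j$, cut-offs $\xi_j$, commutators $\bfg_j$, $h_j$), applies the half-space estimate of Theorem~\ref{thm:StokesH}(a) for the non-divergence form, and invokes Theorem~\ref{thm:stokessteadyF} only to control the lower-order terms $\|\bfu\|_{W^{s-1,p}}+\|\pi\|_{W^{s-2,p}}$ via $\bfF:=\nabla\Delta_\Omega^{-1}\bff$ and an absorption argument. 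You instead write $\bff=\Div\bfF$ with $\|\bfF\|_{W^{s-1,p}(\Omega)}\lesssim\|\bff\|_{W^{s-2,p}(\Omega)}$ and shift the resulting boundary term into the datum, $\GG':=\GG+(\bfF\,\bfn)_{\bftau}$, so that the solution of \eqref{eq:Stokesdiv} with data $(\bfF,0,0,\GG')$ solves \eqref{eq:Stokes}; this is a clean global reduction that avoids redoing all commutator estimates. What it buys is brevity; what it costs is that it hinges on two points you correctly identify but should verify with the paper's multiplier calculus: (i) the trace $\bfF|_{\partial\Omega}\in W^{s-1-1/p,p}(\partial\Omega)$ (fine since $s\geq2$, and this is precisely why the same shortcut is unavailable for the divergence-form theorem itself, where only $s\geq1$ is assumed) and the fact that multiplication by $\bfn$, i.e.\ by $\nabla\varphi_j$ and $(1+|\nabla\varphi_j|^2)^{-1/2}$, is bounded on $W^{s-1-1/p,p}(\partial\Omega)$ under $\varphi_j\in\mathscr M^{s+1-1/p,p}$ resp.\ $W^{s+1-1/p,p}$ --- the same calculus the paper uses for $\boldsymbol{\mathfrak G}^1,\boldsymbol{\mathfrak G}^2$, so this is available but not free; and (ii) the solvability $\bff=\Div\bfF$ with full $W^{s-1,p}$ control, which your extension/Newtonian-potential construction provides. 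Two small corrections: the reduction of $(h,\mathfrak g)$ should cite Theorem~\ref{thm:laplaceF} (applied at regularity level $s+1$), not Theorem~\ref{thm:laplace}, to gain two derivatives over $h\in W^{s-1,p}$; and the remark about traces in $L^2(\partial\Omega)$ is unnecessary once Theorem~\ref{thm:stokessteadyF} is applied as stated --- its hypothesis $n(\tfrac1p-\tfrac12)+1\leq s$ is all that is needed, and you check it correctly for $p>\tfrac{2n}{n+1}$, $s\geq2$.
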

\begin{remark}\label{rem:main}

We refer to Remark \ref{rem:main2} for details regarding the assumption on $\partial\Omega$ formulated in terms of Besov spaces. We just mention here the case
$s=p=2$ and $n=3$. Then Theorem \ref{thm:stokessteady} (b) applies and we require that 
$\partial\Omega\in W^{5/2,2}$.
\end{remark}
\begin{remark}
If $n/2<p$ one easily checks that $W^{1-1/p,p}\hookrightarrow \mathscr M^{1-1/p,p}_{2-1/p,p}$ such that Theorem Theorem \ref{thm:stokessteady} applies provided $\alpha\in W^{1-1/p,p}(\partial\Omega)$.
\end{remark}
\begin{proof}

As in the proof of Theorem \ref{thm:stokessteadyF}
we assume $\mathfrak g=h=0$ (using now Theorem \ref{thm:laplaceF})
and suppose that all quantities are sufficiently smooth.
Also
we introduce again local coordinates and transform the system (see also \cite[Section 4]{Br2}). With the same notation as there, setting also $\mathcal S(\bfv,\theta)=\Div\bfS(\bfv,\theta)$ and
\begin{align*}
\bfg_j:=\mathrm{det}(\nabla\bfPhi_j)(\nabla\xi_j\cdot\bfu)\circ\bfPhi_j([\Delta,\xi_j]\bfu-[\nabla,\xi_j]\Pi+\xi_j\bff)\circ\bfPhi_j
\end{align*}
we obtain
\begin{align}\label{eq:Stokes3'F}
\Delta\bfv_j-\nabla\theta_j&=\mathcal S (\bfv_j,\theta_j)+\bfg_j,
\quad\Div\bfv_j=\mathfrak s(\bfv_j)+h_j,\quad\text{in}\quad\mathbb H,\\
 \label{eq:Stokes5bF}
\bfv_j\cdot\bfe_n&=\mathfrak g(\bfv_j),\quad \partial_n\tilde\bfv_j =\boldsymbol{\mathfrak G}(\bfv_j),\quad\text{on}\quad\partial\mathbb H.
\end{align}

The estimate for the half space (see Theorem \ref{thm:StokesH})
implies
\begin{align}\label{eq:0201c}
\begin{aligned}
\|\bfv_j\|_{W^{s,p} }+\|\theta_j\|_{W^{s-1,p} }&\lesssim \|\mathcal S (\bfv_j,\theta_j)+\bfg_j\|_{W^{s-2} }+\|\mathfrak s(\bfv_j)+h_j\|_{W^{s-1,p} }\\
&+\|\mathfrak g(\bfv_j)\|_{W^{s-1/p,p} }+\|\boldsymbol{\mathfrak G}(\bfv_j)\|_{W^{s-1-1/p,p} }
\end{aligned}
\end{align}
for all $s\geq 1$. Except for the first on the right-hand side all terms can be estimated exactly as in the proof of Theorem \ref{thm:stokessteadyF}. We clearly have,
\begin{align*}
 \|\mathcal S (\bfv_j,\theta_j)\|_{W^{s-2} }\leq  \|\bfS (\bfv_j,\theta_j)\|_{W^{s-1} }
\end{align*}
such that also this estimate is accordingly. Finally, we have
 \begin{align*}
 \|\bfg_j\|_{W^{s-2,p} }+ \|h_j\|_{W^{s-1,p} }&\lesssim \|\bfu\circ\bfPhi_j\|_{W^{s-1,p} }+\|\pi\circ\bfPhi_j\|_{W^{s-2,p} }+ \|\bff\circ\bfPhi_j\|_{W^{s-2} }\\
& \lesssim \|\bfu\|_{W^{s-1,p} }+\|\pi\|_{W^{s-2,p} }+\|\bff\|_{W^{s-2,p} },
 \end{align*}
 
%
In order to estimate the lower order terms on the right-hand side we set $\bfF:=\nabla\Delta_\Omega^{-1}\bff$ and employ the estimate from Theorem \ref{thm:stokessteadyF} yielding for $\delta>0$ arbitrary
\begin{align*}
\|\bfu\|_{W^{s-1,p}(\Omega)}+\|\pi\|_{W^{s-2,p}(\Omega)}&\leq\,\delta\big(\|\bfu\|_{W^{s,p}(\Omega)}+\|\pi\|_{W^{s-1,p}(\Omega)}\big)+c(\delta)\big(\|\nabla \bfu\|_{L^{p}(\Omega)}+\|\pi\|_{L^{p}(\Omega)}\big)\\
&\leq\,\delta \|\bfu\|_{W^{s,p}(\Omega)}+c(\delta) \big(\|\bfF\|_{L^{p}(\Omega)}+\|\GG\|_{W^{-1/p,p}({\partial\Omega})}\big)\\
&\leq\,\delta\|\bfu\|_{W^{s,p}(\Omega)}+c(\delta)\big( \|\bff\|_{W^{s-2,p}(\Omega)}+\|\GG\|_{W^{s-1-1/p,p}({\partial\Omega})}\big)
\end{align*}
and the proof can be completed as before. 
\end{proof}

Similarly to Corollary \ref{cor:stokessteadyF} we can treat the missing indices with an additional assumption on the multipliers.
\begin{corollary}\label{cor:stokessteady}
Let $n=2,3$, $p\in(1,\frac{2n}{n+1})$ and $s\geq2$. Assume that
$\alpha:\partial\Omega\rightarrow[0,\infty)$ is a measurable function belonging to the class $\alpha\in \mathscr M_{s-1/p,p}^{s-1-1/p,p}\cap \mathscr M_{s-1-1/p',p'}^{s-2-1/p',p'}$.
Let $\Omega$ be a bounded Lipschitz domain and suppose
that either
\begin{enumerate}
\item[(a)]  $p(s-1)\leq n$ and $\partial\Omega$ belongs to the class
$\mathscr M^{s+1-1/p,p}\cap \mathscr M^{s-1/p',p'}(\delta)$ for some sufficiently small $\delta>0$ or
\item[(b)] $p(s-1)> n$ and $\partial\Omega$ belongs to the class
$W^{s+1-1/p,p}\cap W^{s-1p',p'}$. 
\end{enumerate}
Then the estimate from Theorem \ref{thm:stokessteady} holds.
\end{corollary}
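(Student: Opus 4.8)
The plan is to follow the blueprint of the proof of Corollary~\ref{cor:stokessteadyF}: keep the entire scheme of the proof of Theorem~\ref{thm:stokessteady} intact and change only the one step in which the hypothesis $p>\tfrac{2n}{n+1}$ was spent. Concretely, I would first reduce, by Theorem~\ref{thm:laplaceF}, to the case $h=\mathfrak g=0$, introduce the finite covering $\mathcal U^0,\dots,\mathcal U^\ell$ with a subordinate partition of unity, localize, flatten each chart by the map $\bfPhi_j$ of \eqref{eq:Phi}, and arrive at the half-space estimate \eqref{eq:0201c}. Every term on the right of \eqref{eq:0201c} is bounded verbatim as in Theorem~\ref{thm:stokessteady}; those estimates only use the $\mathscr M^{s+1-1/p,p}$- (or $W^{s+1-1/p,p}$-)norm of the charts and the class $\alpha\in\mathscr M_{s-1/p,p}^{s-1-1/p,p}$, both of which are part of the present hypotheses. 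Hence the argument goes through as soon as the lower-order estimate
\begin{align*}
\|\bfu\|_{W^{s-1,p}(\Omega)}+\|\pi\|_{W^{s-2,p}(\Omega)}&\leq\delta\big(\|\bfu\|_{W^{s,p}(\Omega)}+\|\pi\|_{W^{s-1,p}(\Omega)}\big)\\
&\quad+c(\delta)\big(\|\bff\|_{W^{s-2,p}(\Omega)}+\|\GG\|_{W^{s-1-1/p,p}(\partial\Omega)}\big)
\end{align*}
is available for every $\delta>0$, and this is the only place where $p>\tfrac{2n}{n+1}$ entered, through the $L^2$-energy estimate used in Theorem~\ref{thm:stokessteady} (after writing $\bff=\Div\bfF$, $\bfF:=\nabla\Delta_\Omega^{-1}\bff$) to control the $W^{1,p}$-norm of the solution in terms of the data.

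For $p\in(1,\tfrac{2n}{n+1})$ I would instead obtain this $W^{1,p}$-bound for the divergence-form problem \eqref{eq:Stokesdiv} by duality, exactly as in Corollary~\ref{cor:stokessteadyF}. The key point is that the additional hypotheses $\alpha\in\mathscr M_{s-1-1/p',p'}^{s-2-1/p',p'}$ and $\partial\Omega\in\mathscr M^{s-1/p',p'}(\delta)$ (resp.\ $W^{s-1/p',p'}$) are precisely those of Theorem~\ref{thm:stokessteadyF} for \eqref{eq:Stokesdiv} read at differentiability level $s-1$ in the conjugate exponent $p'$; moreover, since $p'>2$ one has $n\big(\tfrac1{p'}-\tfrac12\big)+1<1\le s-1$, so the growth restriction of Theorem~\ref{thm:stokessteadyF} holds automatically, and (by interpolation together with the reduction to small localization sets recorded in the remarks after Theorem~\ref{thm:stokessteadyF}, on which $W^{s-1/p',p'}$- and $\mathscr M^{s-1/p',p'}$-charts coincide via \eqref{eq:MSb}--\eqref{eq:MSc}) these hypotheses in particular contain the level-one conditions $\alpha\in\mathscr M_{1-1/p',p'}^{-1/p',p'}$, $\partial\Omega\in\mathscr M^{2-1/p',p'}(\delta)$ needed for the $W^{1,p'}$-theory of the dual problem. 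Dualizing as in Appendix~\ref{sec:AA} then yields $\|\nabla\bfu\|_{L^p(\Omega)}+\|\pi\|_{L^p(\Omega)}\lesssim\|\bfF\|_{L^p(\Omega)}+\|\GG\|_{W^{-1/p,p}(\partial\Omega)}$, and inserting $\|\bfF\|_{L^p(\Omega)}\lesssim\|\bff\|_{W^{s-2,p}(\Omega)}$, $\|\GG\|_{W^{-1/p,p}(\partial\Omega)}\lesssim\|\GG\|_{W^{s-1-1/p,p}(\partial\Omega)}$ and interpolating $W^{s-1,p}$ between $W^{s,p}$ and $W^{1,p}$ (and likewise for the pressure) gives the displayed lower-order estimate.

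With the lower-order estimate at hand, the remainder of the proof of Theorem~\ref{thm:stokessteady} — summation over the covering, absorption after choosing $\delta$ small, and the regularisation/approximation procedure that produces an actual solution (unique, since $p<2$, again by the duality of Appendix~\ref{sec:AA}) and removes the a priori smoothness — carries over without change. The step I expect to be the main obstacle is making the duality rigorous: one must check that the dual of \eqref{eq:Stokesdiv} is again a Stokes system of Navier type with the correct normal and tangential boundary data, that the multiplier conditions on $\alpha$ and on the charts transfer correctly under this dualization (this is the assertion that the multiplier assumptions ``commute with duality''), and that the case distinctions for the $p'$-problem are reconciled with the single split $p(s-1)\lessgtr n$ of the statement — all of which is the content of Appendix~\ref{sec:AA} combined with the multiplier calculus of Section~\ref{sec:SM}.
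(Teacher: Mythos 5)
Your proposal takes essentially the same route the paper intends: for this corollary the paper only remarks that the missing range is treated ``similarly to Corollary~\ref{cor:stokessteadyF}'', i.e.\ one obtains the low-order $W^{1,p}$-type bound by the duality argument of Appendix~\ref{sec:AA} (this is exactly where the additional $p'$-hypotheses on $\alpha$ and on the boundary charts are spent) and then repeats the proof of Theorem~\ref{thm:stokessteady}, modifying only the lower-order interpolation step. Your write-up is in fact somewhat more explicit than the paper's about why the level-$(s-1)$, $p'$ assumptions cover the dual problem, and this is consistent with the paper's multiplier calculus.
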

\appendix 
\section{Duality}
\label{sec:AA}
Here we present some duality arguments which show how one deduce estimates in a certain dual space from the corresponding pre-dual theory for the case of a bounded domain $\Omega \subset\R^n$.
\subsection{Duality from $L^{p'}$ to $L^p$}
\label{ssec:dualp}
The centre of the duality is related to the space with the gradient $\nabla \bfu\in L^2(\Omega)$. First we show how to turn $L^p(\Omega)$-estimates to $L^{p'}(\Omega)$-estimates for the gradient. Due to the results for the Neumann problem, see Appendix \ref{sec:AB} below, we can argue as in the preamble of Subsection~\ref{sec:half} and assume that \[
\bfu\cdot \bfn=0\text{ on }\partial \Omega\text{ and }\divergence\bfu=0\text{ in }\Omega.
\]
Further we sub summarize $f,F,\GG$ as a general right hand side $\tilde{f}$, namely
\[
\int_\Omega\nabla\bfu:\nabla\bfphi\dx+\int_{\partial\Omega}\alpha\bfu\cdot\bfphi\,\dd\mathcal H^{n-1} = \tilde f(\bfphi):=\int_{\partial\Omega}\GG\cdot\bfphi\,\dd\mathcal H^{n-1}-\int_{\Omega}\bfF:\nabla\bfphi\dx
\]
for all $\bfphi\in W^{1,2}_{n,\Div}(\Omega)$.
By the dual norm formula we have
\begin{align*}
\norm{\nabla \bfu}_{L^p(\Omega)}=\sup_{\bfPsi\in C^\infty_c(\Omega), \norm{\bfPsi}_{L^{p'}(\Omega)}\leq 1} \int_\Omega\nabla \bfu:\bfPsi\dx.
\end{align*}
For a fixed $\bfPsi\in C^\infty_c(\Omega)$ with $\norm{\bfPsi}_{L^{p'}}\leq 1$ we solve
\begin{align}\label{eq:Stokespsi}
\begin{aligned}
\Delta \bfu^{\bfpsi}-\nabla\pi^{\bfpsi}=-\Div\bfpsi,\quad\Div\bfu^\bfpsi=0,\quad\text{in}\quad\Omega,\\
\bfu^\bfpsi\cdot\bfn=0,\quad (\nabla\bfu^\bfpsi\,\bfn)_{\bftau}+\alpha (\bfu^\bfpsi)_\bftau=0,\quad\text{on}\quad\partial\Omega,
\end{aligned}
\end{align}
We obtain
\begin{align*}
\int_\Omega\nabla \bfu:\bfPsi\dx=\int_\Omega\nabla \bfu: \nabla \bfu^\Psi\dx+\alpha\int_{\partial\Omega}\bfu\cdot\bfu_{\bfPsi}\,\dd \mathcal H^{n-1}=\tilde f(\bfu^\bfPsi)\leq \norm{f}_{(W^{1,p'}_{n}(\Omega))'}\norm{\nabla \bfu^\bfPsi}_{L^{p'}(\Omega)}.
\end{align*}
In consequence $W^{1,p'}$-estimates imply $W^{1,p}$-estimates.
 
\subsection{Very weak solutions}
In order to get solutions below Sobolev functions we introduce very weak solutions. As data we consider $\tilde{f}$ a distribution on divergence free functions with zero normal trace, that unifies, $f, F$ and $\GG$ and $\tilde{g}$ that unifies $g$ and $h$, i.e., if $g$ and $h$ are measurable functions
\[
\tilde{g}(\psi)=\int_\Omega g\psi\dx-\int_{\partial \Omega}h\, \psi\, \dd\mathcal H^{n-1}\text{ for all }\psi\in C^\infty(\Omega)\text{ with }\partial_n \psi=0\text{ on }\partial \Omega.
\]
 Then we consider the very weak Neumann problem
\[
\skp{w}{\Delta\psi}=\tilde{g}(\psi)\text{ for all }\psi\in C^\infty(\Omega)\text{ with }\partial_n \psi=0\text{ on }\partial \Omega. 
\]
If $w$ was smooth
then
\[
\Delta \psi= g\text{ in }\Omega\text{ with }\partial_n \psi= h\text{ on }\partial \Omega.
\]
Accordingly we then say that $\bfu$ is a very weak solution to \eqref{eq:Stokes}
 if for some given distributions $\tilde f,\tilde g$
\begin{align}
\label{eq:veryweak}
\begin{aligned}
-\skp{\bfu}{\Delta \bfphi} &= \tilde{f}(\phi)\text{ for all }\bfphi\in C^\infty_{\divergence}(\Omega)\text{ s.t. }\bfphi
\cdot \bfn=0\text{ and }(\nabla\bfphi \bfn)_\bftau=-(\alpha \bfphi)_\bftau \text{ on }\partial \Omega,
\\
-\skp{\bfu}{\nabla \psi}&=\tilde{g}(\psi)\text{ for all }\psi\in C^\infty(\Omega)\text{ with }\partial_n \psi=0\text{ on }\partial \Omega.
\end{aligned}
\end{align}
It can be easily checked that if $\bfu$ is smooth enough it is a weak solution. In particular, if for some integrable functions $g$ and $h$
\[
\tilde{g}(\psi)=\int_\Omega g\psi\dx-\int_{\partial \Omega}h\, \psi\, \dd\mathcal H^{n-1},
\]
one finds that $\divergence \bfu=g$ in $\Omega$ and $u\cdot n=h$ on $\partial \Omega$.
One can argue similarly in case $f$ is of the form
\begin{align*}
\tilde{f}(\bfphi)=\int_\Omega \bff\cdot \bfphi+ \bfF\cdot \nabla \phi\dx+\int_{\partial \Omega} \sum_{i=1}^{n-1}\Big((\GG_i-\bfF \bfn\cdot \tau_i)\bfphi\cdot \tau_i  + h \Big)\, \dd\mathcal H^{n-1}
\end{align*}
and find the respective weak formulation.
\subsection{Duality from $W^{2,p'}$ to $L^{p}$ and $W^{2+s,p}$ to $(W^{s,p}_n)'$}
By the dual norm formula we have
\begin{align*}
\norm{\bfu}_{L^p(\Omega)}=\sup_{\bfpsi\in C^\infty_c(\Omega), \norm{\bfpsi}_{L^{p'}(\Omega)}\leq 1} \int_\Omega\bfu\cdot\bfpsi\dx.
\end{align*}
For a fixed $\bfpsi\in C^\infty_c(\Omega)$ with $\norm{\bfpsi}_{L^{p'}}\leq 1$ we solve
\begin{align}\label{eq:Stokespsiweak}
\begin{aligned}
\Delta \bfu^{\bfpsi}-\nabla\pi^{\bfpsi}=\bfpsi,\quad\Div\bfu^\bfpsi=0,\quad\text{in}\quad\Omega,\\
\bfu^\bfpsi\cdot\bfn=0,\quad (\nabla\bfu^\bfpsi\,\bfn)_{\bftau}+\alpha (\bfu^\bfpsi)_\bftau=0,\quad\text{on}\quad\partial\Omega,
\end{aligned}
\end{align}
We obtain
\begin{align*}
&\int_\Omega \bfu\cdot\bfpsi\dx=-\int_\Omega \bfu: (\Delta  \bfu^\Psi-\nabla \pi^{\Psi})\dx=\tilde f(\bfu^\bfPsi) + \tilde{g}(\pi^{\Psi})
\\
&\quad \leq \norm{f}_{(W^{2,p'}_{n}(\Omega))'}\norm{\bfu^\bfPsi}_{W^{2,p'}(\Omega)} + \norm{\tilde{g}}_{(W^{1,p'}_{n}(\Omega))'}\norm{\pi^{\Psi}}_{W^{1,p'}(\Omega)} .
\end{align*}
In consequence $W^{2,p'}$-estimates imply $L^p$-estimates.

For higher order estimates, we can generally introduce estimates of $\bfu$ as a distribution. This means
\begin{align*}
\norm{\bfu}_{(W^{s,p}_n(\Omega))'}:=\sup_{\bfpsi\in C^\infty_{n}(\Omega):\norm{\bfpsi}_{W^{s,p}(\Omega)}\leq 1} \skp{\bfu}{\bfpsi},
\end{align*}
but know
\begin{align*}
\skp{\bfu}{\bfpsi}&=-\skp{\bfu}{\Delta  \bfu^\Psi-\nabla \pi^{\Psi}}=\tilde f(\bfu^\bfPsi)+\tilde{g}(\pi^{\Psi})\\&\leq \norm{f}_{(W^{2+s,p}_{n}(\Omega))'}\norm{\bfu^\bfPsi}_{W^{2+s,p}(\Omega)} + \norm{\tilde{g}}_{(W^{1+s,p}_{n}(\Omega))'}\norm{\pi^{\Psi}}_{W^{1+s,p}(\Omega)}
\end{align*}
\subsection{Duality from $W^{2-\sigma,p'}$ to $W^{\sigma,p}$}
 We take $\sigma<1$ and follow the previous strategy.
By the dual norm formula we find that
\begin{align*}
\norm{\bfu}_{W^{\sigma,p}}\lesssim
\|\nabla\bfu\|_{(W^{1-\sigma,p'}_n(\Omega))'}
=\sup_{\bfpsi\in W^{1-\sigma,p'}_n:\,\|\bfpsi\|_{W^{1-\sigma,p'}}\leq 1}\skp{\nabla\bfu}{\bfpsi}.
\end{align*}
Now we approximate $\tilde{f}$ by smooth functions $\tilde{f}_\epsilon$ (again we can assume that $\tilde{g}=0$ by using the respective argument for the Neumann problem). We solve the system with and obtain the smooth solution $\bfu_\epsilon$. We can follow the steps as in A.1., to find that
\begin{align*}
\skp{\nabla\bfu_\epsilon}{\bfpsi}=\int_\Omega\nabla \bfu_\epsilon: \nabla \bfu^\Psi\dx+\alpha\int_{\partial\Omega}\bfu_\epsilon\cdot\bfu_{\bfPsi}\,\dd \mathcal H^{n-1}=\tilde f_\epsilon(\bfu^\bfPsi)\lesssim \norm{f}_{(W^{1-\sigma,p'}_{n}(\Omega))'}\norm{\bfu^\bfPsi}_{W^{1-\sigma,p'}_n},
\end{align*}
which implies the result by passing with $\epsilon\to 0$.

\section{Neumann problems in irregular domains}
\label{sec:AB}
In this section we consider the Laplace equation with Neumann boundary conditions, i.e.
\begin{align}\label{eq:Laplace}
\Delta u=-\Div\bfF\quad\text{in}\quad\Omega,\quad (\nabla u+\bfF)\cdot\bfn=\chi\quad\text{on}\quad\partial\Omega,
\end{align}
in a domain $\Omega\subset\R^n$ of minimal regularity with normal $\bfn$. The following result gives a counterpart of \cite[Chapter 14]{MaSh} for the Neumann problem. It might be known to experts or at least expected but we were unable to trace a precise reference. 
\begin{theorem}\label{thm:laplace}
Let $n\geq 2$, $p\in(1,\infty)$ and $s\in[1,\infty)$.
Let $\Omega$ be a bounded Lipschitz domain and suppose that either
\begin{enumerate}
\item[(a)]  $p(s-1)\leq n$ and $\partial\Omega$ belongs to the class
$\mathscr M^{s-1/p,p}(\delta)$ for some sufficiently small $\delta>0$ or
\item[(b)] $p(s-1)> n$ and $\partial\Omega$ belongs to the class
$W^{s-1/p,p}$. 
\end{enumerate}
For any $\bfF\in W^{s-1,p}({\Omega})$ and $\chi\in W^{s-1-1/p,p}(\partial\Omega)$
there is a unique solution $u$ to \eqref{eq:Laplace} and we have
\begin{align}\label{eq:estlaplace}
\|u\|_{W^{s,p}({\Omega})}\lesssim\|\bfF\|_{W^{s-1,p}({\Omega})}+\|\chi\|_{W^{s-1-1/p,p}({\partial\Omega})}.
\end{align}
The constant in \eqref{eq:estlaplace} depends on
$\mathscr M^{s-1/p}$- or $W^{s-1/p}$-norms
of the local charts in the parametrisation of  $\partial\Omega$.
\end{theorem}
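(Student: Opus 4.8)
The plan is to follow the same localization-and-flattening scheme used in the proof of Theorem~\ref{thm:stokessteadyF}, but adapted to the scalar Neumann problem, which is considerably simpler because there is no pressure and no solenoidal constraint. First I would record the $W^{1,2}$-theory: for $\bfF\in L^2(\Omega)$ and $\chi\in W^{-1/2,2}(\partial\Omega)$ (with the compatibility condition absorbed, since $\Div\bfF$ paired against constants balances the boundary integral of $\chi$) the Lax--Milgram lemma on $W^{1,2}(\Omega)/\R$ gives a unique solution with $\|\nabla u\|_{L^2(\Omega)}\lesssim \|\bfF\|_{L^2(\Omega)}+\|\chi\|_{W^{-1/2,2}(\partial\Omega)}$. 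This serves as the base case and as the source of the lower-order gain in the interpolation step below. Then I would assume $u$ and the charts $\varphi_1,\dots,\varphi_\ell$ smooth, to be removed at the very end by the mollification argument already described (mollify the $\varphi_j$, extend $\bfF$, pass to the limit using linearity).

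Second, I would fix a chart $j$, multiply by the cutoff $\xi_j$ and flatten via $\bfPhi_j$ from~\eqref{eq:Phi}. Writing $v_j:=(\xi_j u)\circ\bfPhi_j$ one obtains, exactly as in the Stokes case, an equation of the form $\Delta v_j = \Div\big((\mathbb I-\bfA_j)\nabla v_j\big) - \Div(\bfG_j+\bfH_j)$ in $\mathbb H$ with Neumann datum $\partial_n v_j + \big((\widetilde{(\bfA_j-\mathbb I)\nabla v_j}+\tilde\bfG_j+\tilde\bfH_j)\big)^n = \mathfrak{X}(v_j)$ on $\partial\mathbb H$, where $\bfA_j=\mathrm{det}(\nabla\bfPhi_j)\nabla\bfPsi_j^\top\circ\bfPhi_j\,\nabla\bfPsi_j\circ\bfPhi_j$, $\bfH_j$ collects the transformed $\bfF_j$, $\bfG_j$ collects the commutator terms $[\Delta,\xi_j]u$ run through $\nabla\Delta_{\mathbb H}^{-1}$, and $\mathfrak X(v_j)$ collects $\xi_j\chi\circ\bfPhi_j$ together with boundary terms coming from the deviation of $\nabla\varphi_j^\perp$ from $\bfe_n$ and from $\nabla\xi_j$. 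Then I would apply the half-space estimate for the Neumann Laplacian (this is classical, the scalar analogue of Theorem~\ref{thm:StokesH}, obtainable by even reflection to $\R^n$ and the Calder\'on--Zygmund theory, followed by real interpolation): $\|v_j\|_{W^{s,p}(\mathbb H)}\lesssim \|(\mathbb I-\bfA_j)\nabla v_j + \bfG_j+\bfH_j\|_{W^{s-1,p}(\mathbb H)} + \|\mathfrak X(v_j)\|_{W^{s-1-1/p,p}(\partial\mathbb H)}$.

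Third comes the estimation of the right-hand side, which is where the multiplier hypotheses enter. The perturbation term satisfies $\|(\mathbb I-\bfA_j)\nabla v_j\|_{W^{s-1,p}(\mathbb H)}\lesssim \|\bfPhi_j\|_{\mathscr M^{s,p}(\mathbb H)}\|v_j\|_{W^{s,p}(\mathbb H)}\lesssim \|\varphi_j\|_{\mathscr M^{s-1/p,p}(\partial\mathbb H)}\|v_j\|_{W^{s,p}(\mathbb H)}$ via~\eqref{eq:MS} and the product rule~\eqref{eq:SoMo'}; the crucial point is that the \emph{bulk} perturbation only needs $\bfPhi_j\in\mathscr M^{s,p}$, i.e.\ $\varphi_j\in\mathscr M^{s-1/p,p}$, which explains the gap of one derivative compared to the Stokes-with-Navier case — there is no boundary term forcing $\mathscr M^{s+1-1/p,p}$ here. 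The boundary term $\|\mathfrak X(v_j)\|_{W^{s-1-1/p,p}(\partial\mathbb H)}$ splits into $\|\xi_j\chi\circ\bfPhi_j\|$, controlled by $\|\chi\|_{W^{s-1-1/p,p}(\partial\Omega)}$ through~\eqref{lem:9.4.1}, plus terms of the shape $\|(\bfe_n-\nabla\varphi_j^\perp)\,(\text{trace of }v_j)\|_{W^{s-1-1/p,p}}$ and $\|v_j\otimes\nabla\xi_j\circ\bfPhi_j\|_{W^{s-1-1/p,p}}$; the first is $\lesssim \|\nabla\varphi_j\|_{\mathscr M^{s-1-1/p,p}_{\tt or}}\|v_j\|_{W^{s,p}(\mathbb H)}$, again only demanding $\varphi_j\in\mathscr M^{s-1/p,p}$, and the second is a genuinely lower-order term in $\|u\|_{W^{s-1,p}(\Omega)}$. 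In case~(a) all multiplier norms are made $\leq c\delta$ by the construction of the covering; in case~(b), $p(s-1)>n$, one uses~\eqref{eq:MSb} and~\eqref{eq:MSc} to convert the $W^{s-1/p,p}$-norm of $\varphi_j$ into $r^{s-n/p}\|\varphi_j\|_{W^{s-1/p,p}}$ and shrinks the chart radius $r$.

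Fourth, I would absorb the terms carrying the small factor $\delta$ (or $c\delta$) into the left-hand side after summing over $j=0,1,\dots,\ell$ (the interior piece $j=0$ being handled by standard interior regularity for $\Delta$), and handle the remaining lower-order term $\|u\|_{W^{s-1,p}(\Omega)}$ by the interpolation-plus-absorption trick exactly as in~\eqref{eq:asin}: interpolate $W^{s-1,p}$ between $W^{s,p}$ and a space into which $W^{1,2}(\Omega)$ embeds (this is why the statement, in full generality, would need a parameter restriction of the type $n(\tfrac1p-\tfrac12)+1\le s$, with the complementary range reached afterwards by the duality argument of Appendix~\ref{sec:AA}, though as stated the theorem claims all $s\ge1$, so one instead uses the $W^{s-1,p}$ bound coming inductively from the result at a lower value of $s$, or a compactness/contradiction argument), then use Young's inequality to split off $\delta\|u\|_{W^{s,p}}+c(\delta)(\|\bfF\|+\|\chi\|)$ and feed in the $W^{1,2}$ energy estimate for the last factor. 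Uniqueness follows from the energy estimate when $p\ge2$ and by duality otherwise. The main obstacle, as in Theorem~\ref{thm:stokessteadyF}, is bookkeeping: organizing the transformed right-hand side so that every term is visibly either (i) a perturbation multiplied by a small multiplier norm, (ii) controlled by the data, or (iii) strictly lower order and thus absorbable — and making sure the Neumann half-space estimate is available in the full fractional range, which I would get by reflection and interpolation.
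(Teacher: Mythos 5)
Your proposal follows essentially the same route as the paper's proof: localization and flattening via the Maz'ya--Shaposhnikova extension, the half-space Neumann estimate by reflection, multiplier bounds for the perturbation $(\mathbb I-\bfA_j)\nabla v_j$ requiring only $\varphi_j\in\mathscr M^{s-1/p,p}$ (with smallness from $\delta$ in case (a) and from \eqref{eq:MSc} with small chart radius in case (b)), interpolation against the energy estimate for the lower-order terms, duality for the range where the embedding into $W^{1,2}$ fails (the paper does exactly this: $W^{1,p}$ for $p<2$ by duality, then absorption via $\|u\|_{W^{s-1,p}}\leq\delta\|u\|_{W^{s,p}}+c(\delta)\|\nabla u\|_{L^p}$), and removal of the smoothness assumption by mollifying the charts. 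The only deviations are cosmetic bookkeeping of the transformed boundary terms, which does not affect the argument.
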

\begin{remark}\label{rem:general}
The above theorem is actually also true, for any $\tilde{f}\in (W^{s,p}(\Omega))'$. In this case one could just solve the homogeneous Dirichlet problem $-\Delta w= \tilde{f}$ in $\Omega$, $w=0$ on $\partial \Omega$, which transforms it into divergence form.
\end{remark}
\begin{proof}[Proof of Theorem~\ref{thm:laplace}]
As in the proof of Theorems \ref{thm:stokessteadyF} and \ref{thm:stokessteady} we work with local Lipschitz charts $\varphi_1,\dots,\varphi_\ell\in\mathscr M^{s-1/p,p}(\mathbb R^{n-1})(\delta)$ (with extensions $\bfPhi_j$) and corresponding decomposition of unity $(\xi_j)_{j=0}^\ell$ and suppose that the solution is sufficiently smooth. With a notation as there we obtain for $v_j:=(\xi_j u)\circ\bfPhi_j$
\begin{align*}
\Div\big(\bfA_j\nabla v_j)&=-\Div(\bfG_j+\bfH_j)\quad\text{in}\quad\mathbb H,\\
(\bfA_j\nabla v_j)^n&+G^n_j+H^n_j=\chi_j\quad\text{on}\quad\partial\mathbb H,
\end{align*}
where
\begin{align*}
\bfG_j&:=-\nabla\Delta^{-1}_{\mathbb H}([\Delta,\xi_j]u+[\Div,\xi_j]\bfF)\circ\bfPhi_j,\quad\bfH_j:=\mathbf{B}_j(\xi_j\bfF)\circ\bfPhi_j,\\\chi_j&:=(\xi_j\chi)\circ\bfPhi_j+(u\nabla\xi_j)\circ\bfPhi_j\cdot\nabla\varphi_j^\perp,
\end{align*}
and denotes $\Delta^{-1}_{\mathbb H}$ the solution operator to the Laplace equation in $\mathbb H$ with homogenous boundary conditions on $\partial\mathbb H$ (applied to smooth and compactly supported data). 
An equivalent formulation reads as
\begin{align*}
\Delta v_j&=\Div\big((\mathbb I_{n\times n}-\bfA_j)\nabla v_j-\bfG_j-\bfH_j\big)\quad\text{in}\quad\mathbb H,\\
\partial_n v_j&+(\bfA_j-\mathbb I_{n\times n})\nabla v_j)^n+G^n_j+H^n_j=\chi_j\quad\text{on}\quad\partial\mathbb H.
\end{align*}
Estimates for the Laplace equation on the half space are well known and follow for instance by the substraction of a suitable extension and reflection similar to Section~\ref{sec:half} above. Hence we infer that
\begin{align*}
\|v_j\|_{W^{s,p}(\mathbb H)}\lesssim \|(\mathbb I_{n\times n}-\bfA_j)\nabla v_j\big\|_{W^{s-1,p}(\mathbb H)}+\|\bfG_j+\bfH_j\|_{W^{s-1,p}(\mathbb H)}+\|\chi_j\|_{W^{s-1-1/p,p}(\partial\mathbb H)}.
\end{align*} 
It holds by \eqref{lem:9.4.1} and \eqref{eq:MS}
 \begin{align*}
\|&\big(\mathbb I_{n\times n}-\bfA_j)\nabla v_j\big\|_{W^{s-1,p}(\mathbb H)}\\&\lesssim \|\big(\mathbb I_{n\times n}-\bfA_j)\nabla v_j\|_{W^{s-1,p}(\mathbb H)}\\
&\lesssim \|\mathbb I_{n\times n}-\bfA_j\|_{\mathscr M^{s-1,p}_{\tt or}(\mathbb H)}\|\nabla v_j\|_{W^{s-1,p}(\mathbb H)}\\
&\lesssim\|\mathbb I_{n\times n}-\nabla\bfPsi_j^\top\circ\bfPhi_j\|_{\mathscr M^{s-1,p}_{\tt or}(\mathbb H)}\|\nabla v_j\|_{W^{s-1,p}(\mathbb H)}\\&+\|\nabla\bfPsi_j^\top\circ\bfPhi_j(\mathbb I_{n\times n}-\nabla\bfPsi_j\circ\bfPhi_j)\|_{\mathscr M^{s-1,p}_{\tt or}(\mathbb H)}\|\nabla v_j\|_{W^{s-1,p}(\mathbb H)}\\
&\lesssim\big(1+\|\nabla\bfPsi_j^\top\circ\bfPhi_j\|_{\mathscr M^{s-1,p}_{\tt or}(\mathbb H)}\big)\|\mathbb I_{n\times n}-\nabla\bfPsi_j^\top\circ\bfPhi_j\|_{\mathscr M^{s-1,p}_{\tt or}(\mathbb H)}\|\nabla v_j\|_{W^{s-1,p}(\mathbb H)}\\
&\lesssim \|\mathbb I_{n\times n}-\bfA_j)\|_{\mathscr M^{s-1,p}_{\tt or}(\mathbb H)}\|\nabla v_j\|_{W^{s-1,p}(\mathbb H)},
\end{align*}
where
\begin{align*}
\|\nabla\bfPsi_j^\top\circ\bfPhi_j\|_{\mathscr M^{s-1,p}_{\tt or}(\mathbb H)}
&\lesssim \|\bfPsi_j\|_{\mathscr M^{s,p}(\mathbb H)}\lesssim 1+\|\varphi_j\|_{\mathscr M^{s-1/p,p}(\partial\mathbb H)}\lesssim 1,\\
\|(\mathbb I_{n\times n}-\nabla\bfPsi_j\circ\bfPhi_j)\|_{\mathscr M^{s-1,p}_{\tt or}(\mathbb H)}&\lesssim \|\nabla\varphi_j\|_{\mathscr M^{s-1-1/p,p}_{\tt or}(\partial\mathbb H))}\\
&= \|\varphi_j\|_{\mathscr M^{s-1/p,p}(\partial\mathbb H))}\lesssim \delta.
\end{align*}
Note that we used \eqref{eq:MSc} with a suitable choice of the support of of the $\varphi_j$ in the case of $p(s-1)>n$.
 On the other hand, by continuity properties of $\Delta^{-1}_{\mathbb H}$ we have
 \begin{align*}
 \|\bfG_j+\bfH_j\|_{W^{s-1,p} }&\lesssim \|u\circ\bfPhi_j\|_{W^{s-1,p} }+ \|\bfF\circ\bfPhi_j\|_{W^{s-2} }\\
& \lesssim \|u\|_{W^{s-1,p} }+\|\bfF\|_{W^{s-1,p} },
 \end{align*}
 where the hidden constant depends on $\mathrm{det}(\nabla\bfPhi_j)$ and  $\|\bfPhi_j\|_{\mathscr M^{s,p}(\mathbb H)}$ being controlled by \eqref{eq:detJ},
 and \eqref{lem:9.4.1}.

 Let us now suppose that $p\geq 2$. Choosing $s_0\in\R$ such that $W^{1,2}({\mathcal{O}})\hookrightarrow W^{s_0,p}({\mathcal{O}})$, there is $\alpha\in(0,1)$ such that
 \begin{align*}
 \|u\|_{W^{s-1,p} }&\leq \|u\|_{W^{s,p} }^{\alpha}\|u\|_{W^{s_0,p} }^{1-\alpha}\lesssim \| u\|_{W^{s,p} }^{\alpha}\| u\|_{W^{1,2} }^{1-\alpha} \\&\lesssim\| u\|_{W^{s,p} }^{\alpha}\big(\|f\|_{W^{-1,2} }+\|\chi\|_{W^{-1/2,2}}\big)^{1-\alpha}\lesssim\|u\|_{W^{s,p} }^{\alpha}\big(\|f\|_{W^{s-2,p} }+\|\chi\|_{W^{s-1-1/p,p} }\big)^{1-\alpha}\\
&\leq\delta \|u\|_{W^{s,p} }+c(\delta)\big(\|f\|_{W^{s-2,p} }+\|\chi\|_{W^{s-1-1/p,p} }\big).
 \end{align*}
Finally, we clearly have
\begin{align*}
\|\chi_j\|_{W^{s-1-1/p,p}(\partial\mathbb H)}\lesssim \|\chi\|_{W^{s-1-1/p,p}(\partial\Omega)}+ \|u\|_{W^{s-1,p} }
\end{align*}
by \eqref{lem:9.4.1}.
Putting everything together shows for all $j\in\{1,\dots,\ell\}$
 \begin{align*}
\|\nabla v_j\|_{W^{s-1,p}(\mathbb H)}\lesssim \delta
 \|u\|_{W^{s,p}(\Omega)}+\| \bfF\|_{W^{s-1,p}(\Omega)}+\|\chi\|_{W^{s-1-1/p,p}(\partial\Omega)}.
\end{align*}
and can complete the proof as that of Theorem \ref{thm:stokessteady} provided $p\geq 2$.
In particular, we have proved (choosing $s=1$)
\begin{align*}
\|\nabla u\|_{L^{p}({\Omega})}\lesssim\|\bfF\|_{L^{p}({\Omega})}+\|\chi\|_{W^{-1/p,p}({\partial\Omega})}
\end{align*} 
for all $p\geq 2$. Now we can employ a simple duality argument to show that the same estimate holds for $p\in(1,2)$ as well, for that see Appendix~\ref{sec:AA}.
Thus we have for $\delta>0$ arbitrary
\begin{align*}
\|u\|_{W^{s-1,p}(\Omega)}&\leq\,\delta \|u\|_{W^{s,p}(\Omega)}+c(\delta)\|\nabla u\|_{L^{p}(\Omega)}\\
&\leq\,\delta \|u\|_{W^{s,p}(\Omega)}+c(\delta) \big(\|f\|_{W^{-1,p}(\Omega)}+\|\chi\|_{W^{-1/p,p}({\partial\Omega})}\big)\\
&\leq\,\delta\|u\|_{W^{s,p}(\Omega)}+c(\delta)\big( \|f\|_{W^{s-2,p}(\Omega)}+\|\chi\|_{W^{s-1-1/p,p}({\partial\Omega})}\big)
\end{align*}
and the proof can be completed as before.
\end{proof}
As in Section \ref{sec:stokessteady} we can argue similarly for the problem in non-divergence form
\begin{align}\label{eq:Laplacef}
-\Delta u=f\quad\text{in}\quad\Omega,\quad \nabla u\cdot\bfn=\chi\quad\text{on}\quad\partial\Omega,
\end{align}
satisfying the compatibility condition $\int_{\Omega} f\dx=\int_{\partial\Omega} \chi\, \dd \mathcal{H}^{n-1}$.
Accordingly obtain on the right-hand side of the transformed problem
$$g_j:=\mathrm{det}(\nabla\bfPhi_j)([\Delta,\xi_j]u-f\xi_j)\circ\bfPhi_j,\quad \chi_j=(\xi_j\chi)\circ\bfPhi_j.$$
We thus obtain the following result.
\begin{theorem}\label{thm:laplaceF}
Let $n\geq 2$, $p\in(1,\infty)$ and $s\in[2,\infty)$.
Let $\Omega$ be a bounded Lipschitz domain and suppose that either
\begin{enumerate}
\item[(a)]  $p(s-1)\leq n$ and $\partial\Omega$ belongs to the class
$\mathscr M^{s-1/p,p}(\delta)$ for some sufficiently small $\delta>0$ or
\item[(b)] $p(s-1)> n$ and $\partial\Omega$ belongs to the class
$W^{s-1/p,p}$. 
\end{enumerate}
For any $f\in W^{s-2,p}({\Omega})$ and $\chi\in W^{s-1-1/p,p}(\partial\Omega)$ satisfying $\int_{\Omega} f\dx=\int_{\partial\Omega} \chi\,\dd \mathcal{H}^{n-1}$
there is a unique solution $u$ to \eqref{eq:Laplace} and we have
\begin{align}\label{eq:estlaplaceF}
\|u\|_{W^{s,p}({\Omega})}\lesssim\|f\|_{W^{s-2,p}({\Omega})}+\|\chi\|_{W^{s-1-1/p,p}({\partial\Omega})}.
\end{align}
The constant in \eqref{eq:estlaplaceF} depends on
$\mathscr M^{s-1/p}$- or $W^{s-1/p}$-norms
of the local charts in the parametrisation of  $\partial\Omega$.
\end{theorem}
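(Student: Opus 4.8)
The plan is to run the proof of Theorem~\ref{thm:laplace} essentially verbatim, the only change being that the datum $f$ now enters with one derivative less than the tensor $\bfF$ did there; after localisation and flattening this is absorbed into the source term $g_j$ recorded before the statement. As in all the previous proofs I would first assume $u$ smooth on a smooth domain (removed at the end by mollifying the charts $\varphi_1,\dots,\varphi_\ell$ and passing to the limit, as in the proof of Theorem~\ref{thm:stokessteadyF}), shrink the $\mathcal U^j$ so that the local Lipschitz constant is small, fix a partition of unity $(\xi_j)_{j=0}^\ell$ subordinate to $\mathcal U^0,\dots,\mathcal U^\ell$ with $\mathcal U^0\Subset\Omega$, and for $j\geq 1$ transform \eqref{eq:Laplacef} through $\bfPhi_j$. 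Writing $v_j:=(\xi_j u)\circ\bfPhi_j$ and moving the variable coefficients to the right one arrives at
\begin{align*}
\Delta v_j=\Div\big((\mathbb I_{n\times n}-\bfA_j)\nabla v_j\big)+g_j\ \ \text{in}\ \mathbb H,\qquad \partial_n v_j+\big((\bfA_j-\mathbb I_{n\times n})\nabla v_j\big)^n=\chi_j\ \ \text{on}\ \partial\mathbb H,
\end{align*}
with $\bfA_j=\mathrm{det}(\nabla\bfPhi_j)\,\nabla\bfPsi_j^\top\circ\bfPhi_j\,\nabla\bfPsi_j\circ\bfPhi_j$ and $g_j=\mathrm{det}(\nabla\bfPhi_j)([\Delta,\xi_j]u-f\xi_j)\circ\bfPhi_j$, $\chi_j=(\xi_j\chi)\circ\bfPhi_j+(u\nabla\xi_j)\circ\bfPhi_j\cdot\nabla\varphi_j^\perp$ as before. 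Since the Neumann problem for the Laplacian on $\mathbb H$ is classical (subtraction of a suitable extension plus reflection, as in Section~\ref{sec:half}), this gives $\|v_j\|_{W^{s,p}(\mathbb H)}\lesssim\|(\mathbb I_{n\times n}-\bfA_j)\nabla v_j\|_{W^{s-1,p}(\mathbb H)}+\|g_j\|_{W^{s-2,p}(\mathbb H)}+\|\chi_j\|_{W^{s-1-1/p,p}(\partial\mathbb H)}$, noting $s-2\geq 0$.

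\textbf{Estimating the right-hand side.} The three terms are bounded exactly as in the proof of Theorem~\ref{thm:laplace}. By \eqref{lem:9.4.1}, \eqref{eq:MS} and \eqref{eq:detJ} the coefficient term is $\lesssim\|\varphi_j\|_{\mathscr M^{s-1/p,p}(\partial\mathbb H)}\,\|v_j\|_{W^{s,p}(\mathbb H)}$, which in case (a) is $\leq c\delta\|v_j\|_{W^{s,p}(\mathbb H)}$ and in case (b), via \eqref{eq:MSc}, is $\lesssim r^{s-n/p}\|\varphi_j\|_{W^{s-1/p,p}}\|v_j\|_{W^{s,p}(\mathbb H)}$ --- hence as small as wanted by choosing $\delta$, respectively the support radius $r$ of $\varphi_j$, small. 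In $g_j$ the part $[\Delta,\xi_j]u$ is lower order in $u$ and the part $f\xi_j$ is the genuinely new contribution; since $\bfPhi_j,\bfPsi_j$ are bi-Lipschitz and lie in the relevant multiplier spaces, \eqref{lem:9.4.1} with $\mathrm{det}(\nabla\bfPhi_j)\approx 1$ (by \eqref{eq:detJ}) yields $\|g_j\|_{W^{s-2,p}(\mathbb H)}\lesssim\|u\|_{W^{s-1,p}(\Omega)}+\|f\|_{W^{s-2,p}(\Omega)}$, and likewise $\|\chi_j\|_{W^{s-1-1/p,p}(\partial\mathbb H)}\lesssim\|\chi\|_{W^{s-1-1/p,p}(\partial\Omega)}+\|u\|_{W^{s-1,p}(\Omega)}$ (the commutator in $\chi_j$ being the trace of a $W^{s-1,p}(\Omega)$-function). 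Estimating the interior piece $j=0$ by interior regularity for the Laplacian and summing over $j$ produces
\begin{align*}
\|u\|_{W^{s,p}(\Omega)}\lesssim\delta\|u\|_{W^{s,p}(\Omega)}+\|u\|_{W^{s-1,p}(\Omega)}+\|f\|_{W^{s-2,p}(\Omega)}+\|\chi\|_{W^{s-1-1/p,p}(\partial\Omega)}.
\end{align*}

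\textbf{Absorption and conclusion.} The $u$-terms on the right are handled as usual. The first goes for $\delta$ small. For the second, working with the normalisation $(u)_\Omega=0$, one interpolates $\|u\|_{W^{s-1,p}(\Omega)}$ between $W^{1,p}(\Omega)$ and $W^{s,p}(\Omega)$ (trivial when $s=2$) to get $\varepsilon\|u\|_{W^{s,p}(\Omega)}+c(\varepsilon)\|u\|_{W^{1,p}(\Omega)}$, and then $\|u\|_{W^{1,p}(\Omega)}\lesssim\|\nabla u\|_{L^p(\Omega)}$ by Poincaré. The base estimate $\|\nabla u\|_{L^p(\Omega)}\lesssim\|f\|_{W^{s-2,p}(\Omega)}+\|\chi\|_{W^{s-1-1/p,p}(\partial\Omega)}$ follows from Theorem~\ref{thm:laplace} at $s=1$ (which needs only the small local Lipschitz constant already secured): write $f=\Div\bfF$ with $\bfF\in L^p(\Omega)$, $\|\bfF\|_{L^p(\Omega)}\lesssim\|f\|_{L^p(\Omega)}$ --- the Bogovskii operator applied to $f-(f)_\Omega$ plus the explicit field $\tfrac{1}{n}(f)_\Omega\,x$ for the constant part --- so that $(\nabla u+\bfF)\cdot\bfn$ differs from $\chi$ only by the smooth term $\bfF\cdot\bfn$, apply that theorem (for $p\geq2$ directly, for $p<2$ by the duality of Appendix~\ref{sec:AA}), and use $\|f\|_{L^p(\Omega)}\le\|f\|_{W^{s-2,p}(\Omega)}$ together with $W^{s-1-1/p,p}(\partial\Omega)\hookrightarrow W^{-1/p,p}(\partial\Omega)$. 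Choosing $\varepsilon$ small completes the a priori bound for smooth data; the approximation step then gives existence and \eqref{eq:estlaplaceF}, while uniqueness (modulo the normalising constant) follows from the compatibility condition $\int_\Omega f\dx=\int_{\partial\Omega}\chi\,\dd\mathcal H^{n-1}$ together with the estimate. Alternatively, the whole theorem can be obtained in one stroke by instead solving $f=\Div\bfF$ with $\bfF\in W^{s-1,p}(\Omega)$ (again via Bogovskii), folding $\bfF\cdot\bfn$ into $\chi$, and invoking Theorem~\ref{thm:laplace} directly.

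\textbf{Main obstacle.} I do not expect a genuinely new analytic difficulty: the content is that of Theorem~\ref{thm:laplace} with the indices on the source shifted by one. The point needing most care is the \emph{double role of} $f$ --- it must stay undifferentiated so as to sit in $g_j\in W^{s-2,p}$ for the top-order half-space estimate, yet must be re-expressed as $\Div\bfF$ with $\bfF\in L^p$ to run the $L^p$-gradient base estimate that drives the absorption of $\|u\|_{W^{s-1,p}}$. One should also check that the hypothesis $\partial\Omega\in\mathscr M^{s-1/p,p}(\delta)$, respectively $W^{s-1/p,p}$, is exactly what both makes the coefficient perturbation small (through \eqref{eq:MS}, \eqref{eq:MSc}) and keeps the auxiliary half-space and divergence-equation estimates available --- which is the case, and is precisely the regularity already used in the proof of Theorem~\ref{thm:laplace}.
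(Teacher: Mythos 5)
Your argument coincides with the paper's (very brief) proof of this theorem: the same localisation and flattening leading to the transformed data $g_j$, $\chi_j$, the same half-space estimate and multiplier-smallness bounds as in Theorem~\ref{thm:laplace}, and the same absorption of the lower-order term through interpolation plus the $s=1$ divergence-form estimate — the paper's analogous step (cf.\ Section~\ref{sec:stokessteady} and Remark~\ref{rem:general}) uses the lift $\bfF:=\nabla\Delta_\Omega^{-1}f$ where you use Bogovskii plus a linear field, an interchangeable detail. Only your optional ``one-stroke'' alternative needs care, since the higher-order Bogovskii bound $W^{s-2,p}\to W^{s-1,p}$ requires data with vanishing traces (a gradient-of-Dirichlet-solution lift is safer); your main route does not rely on it.
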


We consider now the very weak Neumann problem
\[
\skp{w}{\Delta\psi}=\tilde{g}(\psi)\text{ for all }\psi\in C^\infty(\Omega)\text{ with }\partial_n \psi=0\text{ on }\partial \Omega
\]
for some distribution $\tilde g$.
If $w$ was smooth and 
\[
\tilde{g}(\psi)=\int_\Omega g\psi\dx-\int_{\partial \Omega}h \psi\dx
\]
for some given functions $g$ and $h$,
then
\[
\Delta \psi= g\text{ in }\Omega\text{ with }\partial_n \psi= h\text{ on }\partial \Omega.
\]
As in Appendix \ref{sec:AA} one can now use duality
to include non-homogeneous boundary data by considering the trace space
\begin{align*}
\mathscr W^{s-1-1/p,p}(\partial\Omega)&:=\{\nabla v\cdot\bfn|_{\partial\Omega}:\,v\,\in W^{s,p}(\Omega)\},\\
\|\chi\|_{\mathscr W^{s-1-1/p,p}(\partial\Omega)}&:=\inf\{\|v\|_{W^{s,p}(\Omega)}:\,\,,\,v\in W^{s,p}(\Omega),\,\,\nabla v\cdot\bfn=\chi\},
\end{align*}
where we understand the trace $\nabla v\cdot\bfn|_{\partial\Omega}$ and the equality  in the sense of distributions.
For $s>1+1/p$ one clearly has that $\mathscr W^{s-1-1/p,p}(\partial\Omega)=W^{s-1-1/p,p}(\partial\Omega)$, but this relationship is lost if $s\leq 1+1/p$.
In fact, it is more natural for very-weak spaces to consider $\chi$ and $f$ together as a general functional $\tilde{f}\in (W^{s,p}(\Omega))'$. For these one can introduce the very weak solution as a distribution. Namely that
\begin{align}\label{eq:Stokesdivneg}
-\skp{u}{\Delta\phi}=\skp{\tilde{f}}{\phi},
\end{align}
for all $\phi\in C^\infty(\Omega)$ with $\partial_n \phi=0$.
Indeed, in case there exist (smooth) functions $f$ and $\chi$, for which
\[
\skp{\tilde{f}}{\phi}=\int_\Omega \Big(f \phi-\mathbf{F}\cdot \nabla \phi\Big)\dx +\int_{\partial \Omega} \chi\phi\, \dd\mathcal H^{n-1}
\]
any smooth very weak solution satisfies (by density) that
\[
-\int_{\Omega} \Delta u \phi\dx+\int_{\partial\Omega} \partial_n u \phi\dx= \int_\Omega (f+\divergence\mathbf{F})\phi\dx +\int_{\partial \Omega} (\chi-\mathbf{F}\cdot {\bf n}) \phi\, \dd \mathcal H^{n-1},
\]
for all $\phi\in C^\infty(\Omega)$.
But this implies the strong partial differential equation with respective boundary values. Hence we may formulate the following corollary with no loss of generality:
\begin{corollary}\label{cor:laplace}
Let $p\in(1,\infty)$ and $\sigma<1$.
Let $\Omega$ be a bounded Lipschitz domain and suppose for $s:=2-\sigma$
that either
\begin{enumerate}
\item[(a)]  $p(s-1)\leq n$ and $\partial\Omega$ belongs to the class
$\mathscr M^{s+1-1/p,p}(\delta)$ for some sufficiently small $\delta>0$ or
\item[(b)] $p(s-1)> n$ and $\partial\Omega$ belongs to the class
$W^{s+1-1/p,p}$. 
\end{enumerate}
For any $\tilde{f}\in (W^{2-\sigma,p'}({\Omega}))'$ 
there is a unique solution to \eqref{eq:Stokesdivneg} and we have
\begin{align}\label{eq:laplacecor}
\|u\|_{W^{\sigma,p}(\Omega)}
&\lesssim\|f\|_{(W^{2-\sigma,p'}(\Omega))'}
\end{align}
\end{corollary}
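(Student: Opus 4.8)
The plan is to deduce Corollary~\ref{cor:laplace} from the pre‑dual maximal regularity for the Neumann Laplacian in Theorems~\ref{thm:laplace} and~\ref{thm:laplaceF} by the duality machinery of Appendix~\ref{sec:AA}, the solution itself being produced by approximation. First I would reduce to smooth data: by density every $\tilde f\in(W^{2-\sigma,p'}(\Omega))'$ is a limit of functionals $\tilde f_\varepsilon$ represented by smooth $f_\varepsilon,\bfF_\varepsilon,\chi_\varepsilon$ satisfying the compatibility condition $\langle\tilde f_\varepsilon,1\rangle=0$ (forced by testing~\eqref{eq:Stokesdivneg} against constants) with $\|\tilde f_\varepsilon\|_{(W^{2-\sigma,p'}(\Omega))'}\lesssim\|\tilde f\|_{(W^{2-\sigma,p'}(\Omega))'}$. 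For such data the classical Neumann problem has a smooth solution $u_\varepsilon$, normalised by $(u_\varepsilon)_\Omega=0$, which by the integration‑by‑parts identity recorded in Appendix~\ref{sec:AB} is a very weak solution of~\eqref{eq:Stokesdivneg}. Once a uniform bound $\|u_\varepsilon\|_{W^{\sigma,p}(\Omega)}\lesssim\|\tilde f_\varepsilon\|_{(W^{2-\sigma,p'}(\Omega))'}$ is available, linearity of~\eqref{eq:Stokesdivneg} yields existence and the estimate in the limit $\varepsilon\to0$, while uniqueness follows by testing the homogeneous problem against the adjoint solutions constructed below.

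For the uniform bound I would argue by duality. Since $(u_\varepsilon)_\Omega=0$, the $W^{\sigma,p}(\Omega)$‑norm is computed by pairing against mean‑zero elements of the pre‑dual, so it suffices to bound $\langle u_\varepsilon,\psi\rangle$ for $\psi$ ranging over a dense subclass of unit pre‑dual norm. Given such $\psi$ I would solve the adjoint Neumann problem
\begin{align*}
  \Delta\phi=-\bigl(\psi-(\psi)_\Omega\bigr)\ \ \text{in}\ \Omega,\qquad \partial_n\phi=0\ \ \text{on}\ \partial\Omega,\qquad(\phi)_\Omega=0,
\end{align*}
which is admissible since $\int_\Omega\bigl(\psi-(\psi)_\Omega\bigr)\dx=0$. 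Using $\phi$ as a test function in~\eqref{eq:Stokesdivneg} together with $(u_\varepsilon)_\Omega=0$ gives
\begin{align*}
  \langle u_\varepsilon,\psi\rangle=\langle u_\varepsilon,\psi-(\psi)_\Omega\rangle=-\langle u_\varepsilon,\Delta\phi\rangle=\langle\tilde f_\varepsilon,\phi\rangle\le\|\tilde f_\varepsilon\|_{(W^{2-\sigma,p'}(\Omega))'}\,\|\phi\|_{W^{2-\sigma,p'}(\Omega)},
\end{align*}
so the whole problem reduces to the adjoint estimate $\|\phi\|_{W^{2-\sigma,p'}(\Omega)}\lesssim\|\psi\|_{\text{pre-dual}}$.

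That estimate is itself a maximal regularity bound for the Neumann Laplacian at the conjugate exponent $p'$ with smoothness index $s=2-\sigma$. For $\sigma\le0$, where $s\ge2$, it is exactly Theorem~\ref{thm:laplaceF}, the data $\psi-(\psi)_\Omega$ lying in $W^{s-2,p'}(\Omega)=W^{-\sigma,p'}(\Omega)$. For $\sigma\in(0,1)$, where $s\in(1,2)$, I would instead invoke the divergence‑form estimate Theorem~\ref{thm:laplace} together with its general right‑hand‑side extension in Remark~\ref{rem:general}, first rewriting $\psi-(\psi)_\Omega$ in divergence form (e.g.\ by solving an auxiliary Poisson problem) so as to match the pre‑dual norm. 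In both cases the boundary‑regularity hypotheses of the corollary are precisely what these theorems require at index $s=2-\sigma$. Taking the supremum over $\psi$ then delivers the uniform bound, and the limit passage above completes the argument.

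I expect the real work to be in the low‑regularity bookkeeping rather than in any new inequality: correctly identifying the pre‑dual of $W^{\sigma,p}(\Omega)$ over the full range $\sigma<1$ (the difference between $W^{\sigma,p}$ and $W^{\sigma,p}_0$, and the meaning of the trace $\nabla v\cdot\bfn$ when $v\in W^{2-\sigma,p'}$ with $2-\sigma$ close to $1$, as in the space $\mathscr W^{s-1-1/p,p}(\partial\Omega)$ introduced before the corollary), verifying by density that the adjoint $\phi$ is an admissible test function in~\eqref{eq:Stokesdivneg}, and—for $\sigma\in(0,1)$—making the divergence‑form reduction for Theorem~\ref{thm:laplace} quantitatively consistent with the pre‑dual norm. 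None of this is deep, but it is where the care is needed.
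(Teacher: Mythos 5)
Your proposal follows essentially the same route the paper intends: approximate $\tilde f$ by smooth data, solve the adjoint Neumann problem for each pre-dual test function $\psi$, and transfer the maximal regularity of Theorems~\ref{thm:laplace} and~\ref{thm:laplaceF} at smoothness $s=2-\sigma$ through the very weak formulation \eqref{eq:Stokesdivneg}, exactly as sketched in Appendices~\ref{sec:AA} and~\ref{sec:AB}. The only point to watch is that the adjoint problem is posed at the conjugate exponent $p'$, so the multiplier/boundary hypotheses are really needed in the $p'$-scale rather than "precisely" as stated in the corollary — but this bookkeeping is left implicit in the paper itself and is not a defect of your argument.
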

%

\smallskip
\par\noindent
{\bf Acknowledgement}. 
D. Breit has been funded by Grant BR 4302/3-1 (525608987) of the German Research Foundation (DFG) within the framework of the priority research program SPP 2410 and by Grant BR 4302/5-1 (543675748) of the German Research Foundation (DFG).

S. Schwarzacher is partially supported by the ERC-CZ Grant CONTACT LL2105 funded by the Ministry of Education, Youth and Sport of the Czech Republic and by the Charles University Research Centre program No. UNCE/24/SCI/005. S. Schwarzacher also acknowledges the support of the VR
Grant 2022-03862 of the Swedish Research Council and is a member of the Ne\v{c}as Centre for Mathematical Modeling.

\section*{Compliance with Ethical Standards}\label{conflicts}
\smallskip
\par\noindent
{\bf Conflict of Interest}. The author declares that he has no conflict of interest.

\smallskip
\par\noindent
{\bf Data Availability}. Data sharing is not applicable to this article as no datasets were generated or analysed during the current study.

\end{document}